\DeclareMathOperator{\CovMat}{CovMat}
\newcommand{\Cat}{\mathscr{C}}
\newcommand{\Prob}{\mathds{P}}
\newcommand{\E}{\mathds{E}}
\newcommand{\V}{\mathds{V}}
\newcommand{\R}{\mathbb{R}}
\newcommand{\N}{\mathbb{N}}
\newcommand{\mc}[1]{\mathcal{#1}}
\newcommand{\abs}[1]{|{#1}|} 
\newcommand{\bigabs}[1]{\left|{#1}\right|} 
\newcommand{\one}{\mathds{1}}
\newcommand{\boundedcont}[1]{\mathcal{C}_{b}({#1})}
\newcommand*{\defeq}{\mathrel{\vcenter{\baselineskip0.5ex \lineskiplimit0pt
                     \hbox{\scriptsize.}\hbox{\scriptsize.}}}%
                     =}
\newcommand{\integrala}[2]{\left\langle{#1},{#2}\right\rangle}
\newcommand{\integrald}[4]{\int_{#4}{#3}\,{#2(\mathrm{d}{#1})}}
\newcommand{\Acal}{\mathcal{A}}
\newcommand{\Bcal}{\mathcal{B}}
\newcommand{\norm}[1]{\|#1\|}
\newcommand{\ubar}[1]{\underline{#1}}
\newcommand{\oneto}[1]{[{#1}]}
\newcommand{\sqr}[1]{\square_{#1}}
\DeclareMathOperator{\tr}{tr}
\newcommand{\PePe}[1]{\mathcal{PP}(#1)}
\theoremstyle{plain}
\newtheorem{lemma}{Lemma}[section]
\newtheorem{theorem}[lemma]{Theorem}
\newtheorem{corollary}[lemma]{Corollary}
\theoremstyle{definition}
\newtheorem{definition}[lemma]{Definition}
\newtheorem{example}[lemma]{Example}
\newtheorem{remark}[lemma]{Remark}
\theoremstyle{remark}
\begin{document}
\begin{center}
{\LARGE \textbf{\textsf{The Almost Sure Semicircle Law for Random Band\\
\vspace{0.2cm}
Matrices with Dependent Entries}}}\\
\vspace{0.8cm}
{\Large Michael Fleermann, Werner Kirsch and Thomas Kriecherbauer}
\end{center}
\vspace{0.8cm}

\begin{abstract}
\textbf{Abstract.}We analyze the empirical spectral distribution of random periodic band matrices with correlated entries.  The correlation structure we study was first introduced in 2015 in \autocite{HKW} by Hochstättler, Kirsch and Warzel, who named their setup \emph{almost uncorrelated} and showed convergence to the semicircle distribution in probability. We strengthen their results which turn out to be also valid almost surely. Moreover, we extend them to band matrices. Sufficient conditions for convergence to the semicircle law both in probability and almost surely are provided. In contrast to convergence in probability, almost sure convergence seems to require a minimal growth rate for the bandwidth. Examples that fit our general setup include Curie-Weiss distributed, correlated Gaussian, and as a special case, independent entries.
\end{abstract}


\section{Introduction}

The theory of random matrices had its origins in the applications, namely in statistics, where John Wishart analyzed properties of multivariate normal populations (see \autocite{Wishart}), and mathematical physics, where Eugene Wigner studied energy levels of heavy nuclei (see \autocite{Wigner}). Ever since, the reach of this theory has grown tremendously, with fruitful interactions with the fields of information theory (e.g. wireless communication, see \autocite{Telatar}), biology (e.g. RNA analysis, see \autocite{RNA}) and with various branches of mathematics (e.g. free probability, see \autocite{Speicher}).

In the present paper we restrict our attention to the real symmetric case. By a \emph{random matrix} we understand a symmetric $n\times n$ matrix $X_n$, whose entries $X_n(i,j)$ are real-valued random variables on some probability space $(\Omega,\Acal,\Prob)$. Due to symmetry, $X_n$ possesses $n$ real eigenvalues $\lambda^{X_n}_1\leq\ldots\leq\lambda^{X_n}_n$. Based on these random eigenvalues, we consider the empirical spectral distribution (ESD)
\[
\sigma_n \defeq \frac{1}{n} \sum_{i=1}^n \delta_{\lambda_i^{X_n}},
\]
where for each $a\in\R$ we denote by $\delta_a$ the Dirac measure in $a$.
Given such a sequence $(X_n)_n$ we obtain in this way a sequence $(\sigma_n)_n$ of random probability measures, and we can analyze its weak convergence in some probabilistic sense. Arguably the most famous result in this direction is \emph{Wigner's semicircle law} for ensembles with entries which are \emph{independent} (up to the symmetry constraint) and satisfy  $\E X_n(i,j)=0$ and $\E X^2_n(i,j)=n^{-1}$.  It states under mild

\vspace{0.5cm}
\footnoterule
{\footnotesize
\noindent\textit{Date:} October 22, 2019 \newline
\textit{2010 Mathematics Subject Classification:} 60B20\newline
\textit{Keywords:} random matrix, band matrix, dependent entries, semicircle law}

\noindent
 additional conditions that the sequence $\sigma_n$ converges weakly almost surely to the semicircle distribution $\sigma$ on the real line given by its Lebesgue density $\frac{1}{2\pi}\sqrt{4-x^2}\one_{[-2,2]}(x)$ (see e.g.\ the monographs \autocite{Anderson, BaiSi, Tao}). That is, we find a set $A\in\Acal$ with $\Prob(A)=1$, such that for all $\omega\in A$ we have that $\sigma_n(\omega)\to \sigma$ weakly as $n\to\infty$.

Wigner's semicircle law can be viewed as a central limit theorem in random matrix theory. In classical probability theory, the central limit theorem holds even if random variables are mildly correlated. Therefore, in the context of random matrices, a natural question to ask is whether one can relax the assumption of independence in Wigner's semicircle law and still obtain the semicircle distribution as a limit distribution of the ESDs. Another important way to deviate from the classical analysis is to consider band matrices instead of full matrices. In this paper, we derive semicircle laws for random band matrices (including full matrices as a special case) with certain correlated entries. The correlation structure we study is based on a model presented in \autocite{HKW}: In their paper, Hochst\"attler, Kirsch and Warzel defined a new scheme of random matrices they called \emph{approximately uncorrelated}. The entries in these matrices are allowed to exhibit a correlation structure which includes in particular Curie-Weiss models. They showed convergence of the empirical spectral distribution to the semicircle law weakly in probability. In this paper, we address the question of almost sure convergence. To this end, we need to refine the combinatorics of \autocite{HKW}. This allows us to identify additional conditions -- in particular a fourth moment condition -- that guarantee almost sure convergence. Our ensembles are still general enough to include Curie-Weiss models. In addition, our results also apply to random matrices with certain correlated Gaussian entries. Moreover, our method carries over to associated ensembles of periodic band matrices (Theorem~\ref{thm:main} 2,3). Observe that we need to require a minimal growth condition on the bandwidths to obtain almost sure convergence. Interestingly, for convergence in probability it suffices that the bandwidths grow to infinity without any assumptions on the rate (Theorem~\ref{thm:main} 1).
 Finally we would like to point out that even in the case of independent entries, our almost sure result on periodic band matrices (Theorem~\ref{thm:main} 4) appears to be new (cf.\ Problem 2.4.13 in \autocite{PasturBook} for the i.i.d.\ Gaussian case).

Matrix ensembles with correlated entries have been studied, for example, in the publications \autocite{Schenker, Friesen:eins, Friesen:zwei}, where semicircle laws in probability or almost surely were derived. In \autocite{Schenker}, semicircle laws were derived in expectation for random matrices with sparse dependencies. In \autocite{Friesen:eins} and \autocite{Friesen:zwei}, semicircle laws were derived for ensembles with independent diagonals. In contrast, we allow all entries to be correlated.

Recently, results on local laws (which are much stronger than almost sure limit laws) for random matrices with certain correlated entries were obtained in \autocite{AjankiErdos2016,ErdosKruger2017,AjankiErdos2018,ZiliangChe2017}.
Those authors require decay of correlations in some notion of distance of the entries, where the decay goes to zero for large distances, enabled by large matrix dimensions $n$. 
In our models, correlations are independent of the location of the entries, but depend only on the dimension $n$ of the matrix. This allows us to require much less decay in $n$ than in above mentioned publications. In our setup, all correlations between any two distinct entries may be of order $n^{-1}$. In contrast to this, in \autocite{AjankiErdos2016}, where Gaussian ensembles were studied, correlations need to decay at an exponential rate in the distance of the entries, see their condition (2.5). In \autocite{ErdosKruger2017}, which improves upon (an earlier preprint version of) \autocite{AjankiErdos2018}, a higher polynomial decay of the correlations are required, see their assumption (D). Lastly, \autocite{ZiliangChe2017} requires independence of entries far apart, see their Definition 2.1.

Previous results pertaining to periodic band matrices are contained in \autocite{Bogachev}, where the semicircle law was shown in probability for independent entries, and \autocite{Catalano2016}, where in addition, sparse dependencies were studied. We improve upon their results in two ways: On the one hand, we establish the almost sure semicircle law for Wigner type random periodic band matrices, where we require $(n b_n)^{-1}$ to be summable. On the other hand, we allow all entries to be correlated.

Another contribution of this paper are results concerning random matrices with correlated Gaussian entries. Previous work in this direction includes  the work of \autocite{MonvelKhorunzhy1999}, \autocite{BannaMerlevede2015} and \autocite{ChakrabartyHazra2016}, where various correlation structures, different from ours, are assumed. The only requirement we impose on the covariance matrices is that off-diagonal elements decay at a uniform rate. As Gaussian entries are an example fitting our general model, we obtain semicircle laws for periodic random band matrices (including the case of full matrices) in probability and almost surely, where for the latter results we require a minimal growth rate of the bandwidth.

This paper, which is a revised excerpt of the dissertation \autocite{FleermannDiss} of the first author, is organized as follows: In the second section we state our main result, Theorem~\ref{thm:main}, which covers many different cases. This becomes clear in the third section, where we discuss various examples, including Curie-Weiss distributed and Gaussian entries. We also elaborate on the connection to the previous almost uncorrelated scheme developed in \autocite{HKW}. The fourth and final section is devoted to the proof of the main theorem.

We end this introduction with a remark on possible extensions of our results that we plan to pursue in future work. The Curie-Weiss models discussed in the present paper all belong to the cases of critical or super-critical temperatures ($\beta \leq 1$) for which the correlations between distinct entries  decay to $0$ as the matrix size tends to infinity. We believe that our method can also be used to investigate sub-critical temperatures ($\beta > 1$) where the correlations neither vanish nor become sparse in the limit $n \to \infty$. In addition, we want to extend our results from periodic band matrices, where each row has the same number of identically vanishing entries, to strict band matrices where this property is violated for a (not necessarily small) number of rows that are near the top or near the bottom of the matrices.

\section{Statement of Main Results}

To state the main theorem, we need to introduce some concepts and notation.
For all $n\in\N$ we denote by $\oneto{n}$ the set of indices $\{1,\dots,n\}$, and by $\sqr{n}$ the set of index pairs $\{1,\ldots,n\}\times \{1,\ldots,n\}$. Let $(\Omega,\Acal,\Prob)$ be a probability space, and for all $n\in\N$ let $\{a_n(p,q): (p,q) \in \sqr{n} \}$ be a family of real-valued random variables. Then we call the sequence $(a_n)_{n\in\N}$ a \emph{triangular scheme}, if each element $a_n$ is a symmetric $n\times n$ random matrix. In particular, $a_n$ depends on $\{p,q\}$ rather than on $(p, q)$. We therefore
call two pairs of numbers $(a,b),~(c,d)\in\N^2$ \emph{fundamentally different} if $\{a,b\}\neq \{c,d\}$.

We now define $\alpha$-almost uncorrelated triangular schemes. These are the non-scaled versions of the random matrices that we study in this paper.

\begin{definition}
\label{def:alphaalmostuncorrelated}
Let $(a_n)_{n\in\N}$ be a triangular scheme and $\alpha>0$. Then we call $(a_n)_{n\in\N}$ \emph{$\alpha$-almost uncorrelated}, if 
there exist sequences of constants $(C_k)_k$, $(C^{(m)}_n)_{n,m}$, $(D^{(m)}_n)_{n,m}$ with $\lim_{n\to \infty} C^{(m)}_n=0$, $\lim_{n\to \infty} D^{(m)}_n=0$ for each $m \in \N$, such that for all
$N,l\in\N$ and fundamentally different pairs $(p_1,q_1)$, $\ldots$, $(p_{l},q_{l})$ in $\sqr{N}$ it holds:
\begin{enumerate}
	\item[(AAU1)] Distinct decay and boundedness condition: For all $\delta_1,\ldots,\delta_l\in\N$ we have
\begin{equation}\label{eq:alphaone}
	\forall\, n\geq N:~\abs{\E a_n(p_1,q_1)^{\delta_1}\cdots a_n(p_l,q_l)^{\delta_l}} \leq\frac{
	C_{\delta_1 + \ldots + \delta_l}
	}{n^{\alpha\cdot\#\{i\in\{1,\ldots,l\}|\delta_i =1\}}},
\end{equation}
\item[(AAU2)]  Second moment condition:
\begin{equation}\label{eq:alphatwo}
\forall\, n\geq N:~\abs{\E a_n(p_1,q_1)^2\cdots a_n(p_l,q_l)^2-1} \leq C^{(l)}_n,
\end{equation}
\end{enumerate}
and \emph{strongly $\alpha$-almost uncorrelated}, if additionally it holds:
\begin{enumerate}
\item[(AAU3)]  Fourth moment condition:
\begin{equation} \label{eq:alphathree}
\forall\, n\geq N:~\abs{\E a_n(p_1,q_1)^4\cdot (a_n(p_2,q_2)^2\cdots a_n(p_l,q_l)^2-1)} \leq D^{(l)}_n,
\end{equation}
\end{enumerate}
Note that further conditions on the speed of convergence of $(C^{(l)}_n)_{n}$ and $(D^{(l)}_n)_{n}$
are made in the statement of the main theorem.
\end{definition}

In order to familiarize oneself with Definition \ref{def:alphaalmostuncorrelated} it is instructive to verify that classical
\emph{Wigner schemes} are $\alpha$-almost uncorrelated for all $\alpha >0$. By a \emph{Wigner scheme}  $(a_n)_n$ we understand a triangular scheme that satisfies the following conditions:
\begin{enumerate}
\item[i)] For all $n\in\N$, the family $(a_n(i,j))_{1\leq i\leq j\leq n}$ is independent.
\item[ii)] For all $n\in\N$ and $(i,j)\in\sqr{n}$: $\E a_n(i,j)=0$ and $\V a_n(i,j)=1$.
\item[iii)] For all $p\in\N$, there is a 
$C'_p>0$
such that $\sup_{n\in\N}\sup_{(i,j)\in\sqr{n}} \E\abs{a_n(i,j)}^p \leq 
C'_p
$.
\end{enumerate}
In Definition \ref{def:alphaalmostuncorrelated} we do not require independence i) of the entries and it is therefore natural to include conditions on products of entries that correspond to fundamentally different index pairs. Condition (AAU1) replaces condition iii) and weakens the requirement of zero expectation ii) to an asymptotic condition where the parameter $\alpha$ governs the decay rate of the correlations. Note that this decay also depends on the number of factors with simple multiplicity. In a similar fashion conditions (AAU2) and (AAU3) correspond to the requirement of unit variance ii). Note that alle the examples we discuss in Section~\ref{sec:examples} are \emph{strongly $\alpha$-almost uncorrelated}. The reason for introducing also a weaker notion is that this weaker form suffices to prove convergence in probability. Finally, we would like to point the reader to Lemma~\ref{lem:gettoaauthree} that provides further insight into condition (AAU3).

Next we recall the notion of a \emph{periodic band matrix}. Let us begin with an example of a $6\times 6$ periodic band matrix $M$ with bandwidth $5$. It has the structure
\[
M=
\begin{pmatrix}
x_{1,1}		&		x_{1,2}		&		x_{1,3}		& 	0				 	&		x_{1,5}		&	  x_{1,6}  	\\
x_{2,1}		&		x_{2,2}		&		x_{2,3}		& 	x_{2,4} 	&			0				&	  x_{2,6}  	\\
x_{3,1}		&		x_{3,2}		&		x_{3,3}		& 	x_{3,4} 	&		x_{3,5}		&	  	0		   	\\
		0			&		x_{4,2}		&		x_{4,3}		& 	x_{4,4} 	&		x_{4,5}		&	  x_{4,6}  	\\
x_{5,1}		&			0				&		x_{5,3}		& 	x_{5,4} 	&		x_{5,5}		&	  x_{5,6}  	\\
x_{6,1}		&		x_{6,2}		&			0				& 	x_{6,4} 	&		x_{6,5}		&	  x_{6,6} 	\\
\end{pmatrix}.
\]
\label{pg:periodicexamples}
Loosely speaking, a band matrix has non-vanishing entries only on some diagonals centered around the main diagonal. The word \emph{periodic} means that $\oneto{n}$ is identified with $\mathbb Z/n\mathbb Z$ so that e.g.~the zeros in the example above are considered to belong to the same diagonal. The bandwidth of the band matrix is the number of non-vanishing diagonals. It can be any odd natural number smaller than $n$. We also include bandwidth equal to $n$ (regardless if $n$ is odd or not), in which case we obtain the full matrix. Therefore, if
$n\in\N$ is arbitrary, then $b_n\in\N$ will be called \emph{($n$-)bandwidth}, if $b_n\in\{b<n\,|\,b \text{ odd}\}\cup\{n\}$.



Given an ($n$-)bandwidth $b_n$ we call index pairs $(i,j)$ \emph{($b_n$-)relevant}, if the corresponding matrix entries $a_n(i,j)$ do not vanish identically. More precisely, for $b_n=n$ all index pairs $(i,j)$ are ($b_n$-)relevant, whereas for $b_n < n$ ($b_n$-)relevance is equivalent to
\[
\abs{i-j}\leq\frac{b_n-1}{2} ~\text{ or } ~\abs{i-j} \geq n-\frac{b_n-1}{2} .
\]
Observe that for $n\in\N$ and ($n$-)bandwidth $b_n$ there are exactly $b_n$ relevant index pairs in each row, yielding a total of  $n \cdot b_n$ relevant index pairs in $\sqr{n}$.

Any $n\times n$ matrix can be converted to a periodic band matrix with given bandwidth $b_n$ by setting the non-relevant entries equal to $0$: For triangular schemes $(a_n)_n$ and sequences of $n$-bandwidths $b=(b_n)_n$ we define the periodic band matrices $a_n^{b}$ by
\[
\forall\, n\in\N:\,\forall\,(i,j)\in\sqr{n}:~ a_n^{b}(i,j)\defeq
\begin{cases}
a_n(i,j) & \mbox{if $(i,j)$ is $b_n$-relevant}\\
0 & \mbox{otherwise.}
\end{cases}
\]

\begin{definition}
\label{def:randommatrix}
Let $(\Omega,\Acal,\Prob)$ be a probability space, $(a_n)_{n\in\N}$ a triangular scheme and $b=(b_n)_n$ a sequence of $n$-bandwidths.
We say that a sequence of periodic random band matrices $(X_n)_{n\in\N}$ is \emph{based on the triangular scheme $(a_n)_{n\in\N}$ with bandwidth $b$}, if for all $n\in\N$ we have:
	\begin{enumerate}[i)]
		\item $X_n$ has dimension $n\times n$.
		\item $X_n(p,q)=\frac{1}{\sqrt{b_n}} a^{b}_n(p,q) \quad\forall\, (p,q)\in\sqr{n}$.
	\end{enumerate}
\end{definition}

\begin{remark}
Please note that the important and often treated case of full matrices is included in the above definition by setting $b_n=n$.
\end{remark}


Recall from the beginning of the Introduction the definition of the empirical spectral distribution (ESD) $\sigma_n$ which defines a
random probability measure on $(\R,\Bcal)$, the real line equipped with the Borel sigma algebra. The measures $\sigma_n$ are random (i.e., stochastic kernels) as they depend on the realizations of the random matrix entries. For the ensembles considered we prove convergence of these random measures to a deterministic measure called the \emph{semicircle distribution $\sigma$}, defined by its Lebesgue density $f_\sigma$ with

\[
\forall\,x\in\R:~f_\sigma(x) \defeq \frac{1}{2\pi}\sqrt{4-x^2}\one_{[-2,2]}(x).
\]
We define different types of convergence that are relevant for our results.


\begin{definition}\label{def:randomweakconvergence}
Let $\sigma$ be the semicircle distribution on $(\R,\Bcal)$ and denote by $\boundedcont{\R}$ the set of bounded continuous functions $\R\to\R$. Then the empirical spectral distributions $\sigma_n$ of random matrices $X_n$ are said to converge
\begin{enumerate}[i)]
	\item \emph{weakly in expectation} to $\sigma$, if
	\(
	\forall\, f\in\boundedcont{\R}: \E\integrala{\sigma_n}{f}\xrightarrow[n\to\infty]{} \integrala{\sigma}{f},
	\)
	\item \emph{weakly in probability} to $\sigma$, if
	\(
	\forall\, f\in\boundedcont{\R}: \integrala{\sigma_n}{f}  \xrightarrow[n\to\infty]{} \integrala{\sigma}{f} \text{ in probability},
	\)
	\item \emph{weakly almost surely} to $\sigma$, if
		\(
	\forall\, f\in\boundedcont{\R}: \left[\integrala{\sigma_n}{f}\xrightarrow[n\to\infty]{} \integrala{\sigma}{f} \text{ almost surely}\right],	
	\)
\end{enumerate}
where for any measure $\nu$ on $(\R,\Bcal)$ and a $\nu$-integrable function $g:(\R,\Bcal)\to(\R,\Bcal)$, we define
\[
\integrala{\nu}{g}\defeq \integrald{x}{\nu}{g(x)}{\R}.
\]
\end{definition}

The following remarks that shed some more light on these three types of convergence can be found e.g.~in \autocite{FleermannDiss}.
\begin{enumerate}[i)]

\item is the deterministic weak convergence of $\E\sigma_n$ to $\mu$, where $\E\sigma_n$ denotes the probability measure on $(\R,\Bcal)$ given by $(\E\sigma_n)(B)\defeq \E[\sigma_n(B)]$ for all $B\in\Bcal$.

\item If $d$ is \emph{any} metric on the space of probability measures on $(\R,\Bcal)$ that metrizes weak convergence (many such metrics exist), then $\sigma_n\to\mu$ weakly
in probability holds iff $d(\sigma_n,\mu)\to 0$ in probability.

\item If $d$ metrizes weak convergence, then $\sigma_n\to\mu$
weakly almost surely holds iff almost surely
$d(\sigma_n,\mu)\to 0$. This in turn is equivalent to the statement that almost surely we have $[\forall\, f\in\boundedcont{\R}: \integrala{\sigma_n}{f}\to \integrala{\mu}{f}]$. To appreciate the last equivalence, note that $\boundedcont{\R}$ is not separable.
\end{enumerate}

Now if $(\sigma_n)_n$ are the ESD of the random matrices $(X_n)_n$ and we have that $\sigma_n\to\sigma$ weakly in expectation/ in probability/ almost surely, then we say that \emph{the semicircle law holds for $(X_n)_n$} in expectation/ in probability/ almost surely.

We are now ready to state our main result that is proved in Section~\ref{sec:mainproof}.

\begin{theorem}\label{thm:main}
Let $(a_n)_n$ be an $\alpha$-almost-uncorrelated triangular scheme, $b=(b_n)_n$ be a sequence of $n$-bandwidths with $b_n\to\infty$ and $(X_n)_n$ be the periodic random band matrices which are based on $(a_n)_n$ with bandwidth $b$. Then we obtain the following results:
\begin{enumerate}
\item If $\alpha \geq \frac{1}{2}$, then the semicircle law holds for $(X_n)_n$ in probability.
\item If $\alpha\geq \frac{1}{2}$, $\frac{1}{b_n^3}$ is summable over $n$ and all entries of $(a_n)_n$ are $\{-1,1\}$-valued, then the semicircle law holds almost surely for $(X_n)_n$.
\item If $(a_n)_n$ is even strongly $\alpha$-almost-uncorrelated with $\alpha>\frac{1}{2}$, and the sequences $\frac{1}{b_n^{2}}$, $\frac{1}{b_n}D^{(l)}_n$ and $C^{(l)}_n$ are summable over $n$ for all $l\in\N$, then we obtain the semicircle law almost surely for $(X_n)_n$.
\item If $(a_n)_n$ is a Wigner scheme and if $(\frac{1}{nb_n})_n$ is summable, then we obtain the semicircle law almost surely for $(X_n)_n$.
\end{enumerate}
\end{theorem}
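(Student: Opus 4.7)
The plan is to leverage parts~1--3 for convergence of the expected moments and to refine the variance estimate using the genuine independence of Wigner entries in order to reach the weaker bandwidth condition $\sum_n (nb_n)^{-1}<\infty$. First I would verify that every Wigner scheme is strongly $\alpha$-almost uncorrelated for every $\alpha>0$, with $C^{(l)}_n\equiv D^{(l)}_n\equiv 0$: for fundamentally different $(p_1,q_1),\ldots,(p_l,q_l)\in\sqr{N}$, independence factorizes $\E\,a_n(p_1,q_1)^{\delta_1}\cdots a_n(p_l,q_l)^{\delta_l}=\prod_i\E\,a_n(p_i,q_i)^{\delta_i}$; any exponent $\delta_i=1$ kills this product by ii), while (AAU2) and (AAU3) become exact thanks to unit variance and independence. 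In particular, part~1 (or a direct moment computation) yields $\E\integrala{\sigma_n}{x^s}\to m_s$ as $n\to\infty$ for every $s\in\N$, where $m_s$ is the $s$-th moment of the semicircle law.

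The heart of the argument is the variance bound $\V\integrala{\sigma_n}{x^s}=O((nb_n)^{-1})$. Expanding
\[
\V\integrala{\sigma_n}{x^s}=\frac{1}{n^2 b_n^s}\sum_{P_1,P_2}\Cov\bigl(a_n^b(P_1),a_n^b(P_2)\bigr)
\]
as a sum over pairs $(P_1,P_2)$ of closed walks of length $s$ in $\oneto{n}$ with $b_n$-relevant steps, independence of the entries makes the covariance vanish unless the (unordered) edge sets of $P_1$ and $P_2$ intersect. If in addition some edge of the combined multigraph has total multiplicity $1$, then both $\E a_n^b(P_1)a_n^b(P_2)$ and $\E a_n^b(P_1)\cdot\E a_n^b(P_2)$ contain the factor $\E a_n(e)=0$ and vanish. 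So only pairs with all combined edge multiplicities $\ge 2$ contribute, which forces $E\le s$ distinct combined edges and hence $V\le s+1$ distinct combined vertices (the combined graph is connected through the shared edge). The borderline case $V=s+1$ requires the combined multigraph to be a tree with every edge of multiplicity exactly $2$; but every closed walk on a tree uses each edge an even number of times, so $P_1$ and $P_2$ would then be edge-disjoint --- a contradiction. Hence $V\le s$ for every surviving pair. The number of such pairs in the band structure is $O(nb_n^{V-1})\le O(nb_n^{s-1})$ and each covariance is uniformly bounded in terms of $C'_1,\ldots,C'_{2s}$, so the claimed variance bound follows.

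Given this, Chebyshev's inequality gives $\Prob(|\integrala{\sigma_n}{x^s}-\E\integrala{\sigma_n}{x^s}|>\epsilon)=O(1/(nb_n))$, which is summable in $n$ by hypothesis, and Borel--Cantelli together with the first step yields $\integrala{\sigma_n}{x^s}\to m_s$ almost surely for every $s\in\N$. Since the semicircle distribution is compactly supported and hence moment-determined, almost sure convergence of all polynomial moments upgrades to weak almost sure convergence $\sigma_n\to\sigma$; tightness along the set of full probability is automatic from the boundedness of the second moments.

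The main obstacle is the combinatorial variance estimate, and specifically the tree-walk parity argument that eliminates the borderline $V=s+1$ pairs. This is the single step truly specific to the Wigner case, and is responsible for the improvement from the $b_n^{-2}$-summability required in part~3 to the weaker $(nb_n)^{-1}$-summability here; the remaining arguments (Chebyshev, Borel--Cantelli, method of moments, tightness) are routine.
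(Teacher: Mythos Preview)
Your proposal addresses only part~4, taking parts~1--3 as input; for that part the argument is correct and shares the paper's overall architecture (method of moments, expected-moment convergence via the general $\alpha$-almost uncorrelated framework, then a variance bound plus Borel--Cantelli), but the combinatorial heart of your variance estimate is genuinely different.

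The paper first establishes, for \emph{all} $\alpha$-almost uncorrelated schemes with $\alpha\ge\tfrac12$, that the contribution of pairs $(\ubar t,\ubar t')$ sharing an edge is $O((nb_n)^{-1})$; this is Step~2 in the proof of Theorem~\ref{thm:variancetozero}, and it proceeds by splitting according to the profiles of $\ubar t$ and $\ubar t'$ individually (both with only even edges vs.\ at least one with an odd edge), invoking Lemmas~\ref{lem:doublebound} and~\ref{lem:doublehowmany} and, in the odd case, an estimate on $\kappa_1(\ubar u)$ for the joined walk $\ubar u$. The Wigner specialization then consists of the single observation that edge-disjoint pairs have vanishing covariance, so only Step~2 survives. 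Your route instead uses independence \emph{twice}: once to discard edge-disjoint pairs, and once more to discard any pair whose \emph{combined} multigraph has a simple edge. This gives $E\le s$, $V\le s+1$ directly, and your tree--parity argument (closed walks on a tree traverse each edge an even number of times, forcing $P_1,P_2$ edge-disjoint in the borderline case) rules out $V=s+1$ in one stroke, without any case split on individual profiles. What you gain is a shorter, self-contained proof tailored to Wigner schemes; what the paper's route gains is that its $(nb_n)^{-1}$ bound on the common-edge contribution holds uniformly for all $\alpha\ge\tfrac12$, which is exactly what is reused in parts~2 and~3. One small point you gloss over: the count ``$O(nb_n^{V-1})$ pairs'' needs the joining construction of Lemma~\ref{lem:graphsuperposition} (encode $(P_1,P_2)$ as a single $b_n$-relevant $2s$-tuple $\ubar u$, then apply Lemma~\ref{lem:maxnodestuples}); this is routine but should be made explicit.
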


\begin{remark}
Observe that all statements of Theorem~\ref{thm:main} that allow to deduce the almost sure semicircle law require minimal growth conditions on the bandwidths.

In statement 3 the condition on $D^{(l)}_n$ depends on the growth behavior of the bandwidths. However, square summability of $D^{(l)}_n$ always suffices by the Cauchy-Schwarz inequality.

If the bandwidths grow at least at a linear rate, e.g.~for full matrices, the conditions that imply almost sure convergence simplify.  This is the content of the subsequent corollary.
\end{remark}




\begin{corollary}\label{cor:propband}
Let $(a_n)_n$ be an $\alpha$-almost-uncorrelated triangular scheme, $b=(b_n)_n$ a bandwidth
satisfying $\liminf_{n \to \infty} b_n/n > 0$.
Let $(X_n)_n$ be the periodic random band matrices which are based on $(a_n)_n$ with bandwidth $b$.
(This includes in particular the case of full matrices with no diagonals being set to equal $0$.) Then the semicircle law holds almost surely for $(X_n)_n$ if one of the following additional three conditions is satisfied:
\begin{itemize}
\item[2\,$'$.] $\alpha\geq \frac{1}{2}$ and all entries of $(a_n)_n$ are $\{-1,1\}$-valued.
\item[3\,$'$.] $(a_n)_n$ is strongly $\alpha$-almost-uncorrelated with $\alpha>\frac{1}{2}$ and the sequences $C^{(l)}_n$ and $\frac{1}{n}D^{(l)}_n$ are both summable over $n$ for all $l\in\N$.
\item[4\,$'$.] $(a_n)_n$ is a Wigner scheme.
\end{itemize}
\end{corollary}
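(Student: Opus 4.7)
The plan is to derive Corollary~\ref{cor:propband} directly from the corresponding statements of Theorem~\ref{thm:main} by exploiting the assumption $\liminf_{n\to\infty} b_n/n > 0$ to convert summability conditions on powers of $b_n$ into summability conditions on powers of $n$. The first step is to fix some $c > 0$ and $N_0 \in \N$ such that $b_n \geq c n$ for all $n \geq N_0$; this in particular implies $b_n \to \infty$, which is a standing hypothesis of Theorem~\ref{thm:main}.

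For item $2'$, I would invoke statement $2$ of Theorem~\ref{thm:main}. The only hypothesis not already present in $2'$ is summability of $\frac{1}{b_n^3}$ over $n$, but the bound $\frac{1}{b_n^3} \leq \frac{1}{c^3 n^3}$ valid for $n \geq N_0$ immediately yields this from the summability of $\frac{1}{n^3}$. For item $4'$, I would similarly invoke statement $4$ of Theorem~\ref{thm:main}, noting that $\frac{1}{n b_n} \leq \frac{1}{c n^2}$ for $n \geq N_0$, which is summable.

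For item $3'$, the goal is to check the three summability conditions in statement $3$ of Theorem~\ref{thm:main}. Summability of $C^{(l)}_n$ is assumed outright. Summability of $\frac{1}{b_n^{2}}$ follows from $\frac{1}{b_n^{2}} \leq \frac{1}{c^2 n^2}$. For the mixed condition, the estimate $\frac{1}{b_n} D^{(l)}_n \leq \frac{1}{c n} D^{(l)}_n$ (for $n \geq N_0$) together with the assumed summability of $\frac{1}{n} D^{(l)}_n$ over $n$ for every $l \in \N$ yields the required summability of $\frac{1}{b_n} D^{(l)}_n$.

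I do not expect any essential obstacle; the entire argument is an unpacking of the definition of $\liminf_{n\to\infty} b_n/n > 0$ together with straightforward dominated comparisons of series. The only thing to be slightly careful about is that the comparisons only hold from some index $N_0$ onwards, but this is of course harmless for summability questions, since the finitely many initial terms do not affect convergence of the series. All four statements of the corollary then follow by applying the corresponding statements of Theorem~\ref{thm:main}.
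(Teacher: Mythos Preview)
Your proposal is correct and follows essentially the same approach as the paper: the paper's proof simply observes that $\liminf_{n\to\infty} b_n/n > 0$ yields a constant $c>0$ with $b_n > cn$ for all large $n$, and then invokes the corresponding statements of Theorem~\ref{thm:main}. Your write-up merely makes the individual summability comparisons explicit (and note a small slip: there are three conditions $2'$, $3'$, $4'$, not four).
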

\begin{proof}
The hypothesis implies the existence of a positive constant $c$ with $b_n/n > c$ for all $n$ large. The corollary then follows from the corresponding statements of Theorem~\ref{thm:main}.
\end{proof}
Note that the last corollary contains a version of the classical Wigner semicircle law.


\section{Examples}\label{sec:examples}

In this section we treat two classes of $\alpha$-almost uncorrelated triangular schemes and apply Theorem~\ref{thm:main} to prove the semicircle law under suitable conditions.

\subsection{Curie-Weiss Ensembles}

Almost uncorrelated ensembles were first introduced in \autocite{HKW} for the analysis of random matrices with Curie-Weiss distributed entries. Let us start by recalling their definition of approximately uncorrelated triangular schemes \autocite[Definition~4]{HKW}:

\begin{definition}
\label{def:almostuncorrelated}
A triangular scheme $(a_n)_{n\in\N}$ is called \emph{approximately uncorrelated}, if for all $N \in\N$ we have:
\begin{enumerate}
\item[(AU1)]\label{item:AU1} Distinct decay and boundedness condition:
\[
 \forall\, n\geq N:~\abs{\E a_n(p_1,q_1)\cdots a_n(p_s,q_s) a_n(p_{s+1},q_{s+1})\cdots a_n(p_{s+m},q_{s+m})}\leq\frac{K_{s,m}}{n^{s/2}}\label{eq:auone} \\
\]
\item[(AU2)]\label{item:AU2} Second moment condition:
\[
\forall\, n\geq N:~\abs{\E a_n(p_1,q_1)^2\cdots a_n(p_s,q_s)^2-1} \leq K^{(s)}_n\label{eq:autwo}
\]
\end{enumerate}
for all sequences of pairs $(p_1,q_1),\ldots,(p_{s+m},q_{s+m})$ in $\sqr{N}$, where $s,m\in\N_0$, so that $(p_1,q_1),\ldots$, $(p_s,q_s)$ are pairwise fundamentally different and fundamentally different from all other pairs of the sequence $(p_{s+1},q_{s+1}),\ldots,(p_{s+m},q_{s+m})$.
Further, the constants $K_{s,m}$ are non-negative real numbers that only depend on $s$ and $m$, and for all $s\in\N_0$ we have that $(K^{(s)}_n)_{n\in\N}$ is a non-negative real sequence that converges to zero.
\end{definition}

One may identify the roles that $s$ and $s+m$ play in condition (AU1) of Definition~\ref{def:almostuncorrelated} with the terms $\#\{i\in\{1,\ldots,l\}|\delta_i =1\}$ and $\delta_1+\ldots +\delta_l$ appearing in condition (AAU1) of Definition~\ref{def:alphaalmostuncorrelated} respectively. Then the following statement of equivalence is not difficult to prove.


\begin{lemma}\label{lem:translation}
A triangular scheme $(a_n)_n$ is approximately uncorrelated iff it is $\frac{1}{2}$-almost uncorrelated.
\end{lemma}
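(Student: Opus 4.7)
The plan is to prove both directions by explicit bookkeeping between the two notations, since the two conditions are essentially reformulations of the same bound once one identifies the parameters correctly. The key dictionary is that the quantity $s$ in (AU1) corresponds to $\#\{i : \delta_i = 1\}$ in (AAU1), and the total degree $s+m$ in (AU1) corresponds to $\delta_1 + \ldots + \delta_l$ in (AAU1). The second moment conditions (AU2) and (AAU2) are literally the same statement with $K^{(s)}_n \leftrightarrow C^{(s)}_n$, so no work is required there; all the bookkeeping concerns (AU1) $\Leftrightarrow$ (AAU1) with $\alpha = 1/2$.

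For the direction \enquote{approximately uncorrelated $\Rightarrow$ $\tfrac{1}{2}$-almost uncorrelated}, first I would take any fundamentally different pairs $(p_1,q_1),\ldots,(p_l,q_l)$ together with exponents $\delta_1,\ldots,\delta_l \in \N$. By relabeling I may assume the first $s$ indices are precisely those with $\delta_i = 1$, where $s = \#\{i : \delta_i = 1\}$. Then I would rewrite
\[
a_n(p_1,q_1)^{\delta_1}\cdots a_n(p_l,q_l)^{\delta_l}
= a_n(p_1,q_1)\cdots a_n(p_s,q_s) \cdot \prod_{i=s+1}^{l} a_n(p_i,q_i)^{\delta_i},
\]
and expand the products with $\delta_i \geq 2$ into $m := \delta_{s+1} + \ldots + \delta_l$ individual factors. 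The first $s$ factors are pairwise fundamentally different and fundamentally different from all the others (by the hypothesis on the $(p_i,q_i)$), so (AU1) applies and gives a bound $K_{s,m}/n^{s/2}$. Setting $C_k := \max\{K_{s,k-s} : 0 \leq s \leq k\}$ produces the constants required in (AAU1).

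For the converse direction, given pairs $(p_1,q_1),\ldots,(p_{s+m},q_{s+m})$ satisfying the hypotheses of (AU1), I would group the last $m$ pairs according to their fundamental equivalence classes, obtaining $k \leq m$ distinct classes with multiplicities $\varepsilon_1,\ldots,\varepsilon_k$ summing to $m$. Together with the first $s$ pairs this yields $l := s + k$ fundamentally different representatives with exponents $\delta_1 = \ldots = \delta_s = 1$ and $\delta_{s+j} = \varepsilon_j$ for $j=1,\ldots,k$. Applying (AAU1) with $\alpha = 1/2$ I get the bound
\[
\bigabs{\E a_n(p_1,q_1)\cdots a_n(p_{s+m},q_{s+m})} \leq \frac{C_{s+m}}{n^{\#\{i : \delta_i = 1\}/2}} \leq \frac{C_{s+m}}{n^{s/2}},
\]
where the last inequality uses $\#\{i : \delta_i = 1\} \geq s$ (coming from the first $s$ exponents) together with $n \geq 1$. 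Hence $K_{s,m} := C_{s+m}$ works in (AU1).

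The argument is entirely elementary and I do not expect any serious obstacle; the only slightly subtle point is in the converse direction, where one must notice that even if some of the additional equivalence classes among the last $m$ pairs also have multiplicity one, this only increases the exponent of $n$ in the denominator, so the bound only gets stronger, and the choice $K_{s,m} := C_{s+m}$ still suffices.
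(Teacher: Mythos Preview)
Your proposal is correct and follows precisely the identification the paper suggests (the paper does not actually spell out a proof, only notes that $s \leftrightarrow \#\{i:\delta_i=1\}$ and $s+m \leftrightarrow \delta_1+\ldots+\delta_l$ and declares the equivalence \enquote{not difficult}). Your careful bookkeeping in both directions, including the observation that extra singletons among the last $m$ pairs only help, is exactly what is needed to fill in the details.
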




 \begin{remark}\label{rem:comparisonHKW}
 Observe that Lemma~\ref{lem:translation} together with claim 1 of Theorem~\ref{thm:main} proves the semicircle law in probability for periodic random band matrices based on an approximately uncorrelated triangular scheme $(a_n)_n$ with bandwidths $b_n\to\infty$. This generalizes Theorem~5 of \autocite{HKW} to band matrices. As we explain after the present remark, Theorem~31 of \autocite{HKW} that deals with matrices with Curie-Weiss distributed entries (and a certain generalization thereof) is extended to periodic band matrices by the second claim of Theorem~\ref{thm:main}. Moreover, it sharpens \autocite[Theorem~31]{HKW} by lifting convergence in probability to almost sure convergence without changing the hypotheses.
 \end{remark}

 We now recall the definition of \emph{Curie-Weiss ensembles} that was introduced in \autocite{HKW}. Note that in \autocite{HKW} these are called \emph{full Curie-Weiss ensembles} in order to distinguish them from the ensembles studied in \autocite{Friesen:zwei}.


\begin{definition}\label{def:curieweiss}
Let $n\in\N$ be arbitrary and $Y_1,\ldots,Y_n$ be random variables defined on some probability space $(\Omega,\Acal,\Prob)$. Let $\beta>0$, then we say that $Y_1,\ldots,Y_n$ are Curie-Weiss($\beta$,$n$)-distributed\label{sym:CurieWeissdist}, if for all $y_1,\ldots,y_n\in\{-1,1\}$ we have that
\[
\Prob(Y_1=y_1,\ldots,Y_n=y_n) = \frac{1}{Z_{\beta,n}}\cdot e^{\frac{\beta}{2n}\left(\sum y_i\right)^2}
\]
where $Z_{\beta,n}$\label{sym:CWconstant} is a normalization constant. The parameter $\beta$ is called \emph{inverse temperature}.
\end{definition}

The Curie-Weiss($\beta,n$) distribution is used to model the behavior of $n$ ferromagnetic particles (with spins $y_j$) at inverse temperature $\beta$. At low temperatures, that is large values of $\beta$, all spins are likely to have the same alignment, modelling strong magnetization. At high temperatures, however, the spins become almost independent, resembling weak magnetization. We refer the reader to \autocite{KirschMomentSurvey} for a self-contained treatment of the Curie-Weiss distribution.

\begin{theorem}\label{thm:CurieWeiss}
Let $0<\beta\leq 1$ and let the random variables $(\tilde{a}_n(i,j))_{1\leq i,j \leq n}$ be Curie-Weiss($\beta,n^2$)-distributed for each $n\in\N$ . Define the triangular scheme $(a_n)_n$ by setting
\[
\forall\,n\in\N:\,\forall\, (i,j)\in\sqr{n}:~a_n(i,j) =
\begin{cases}
\tilde{a}_n(i,j) & \text{if $i\leq j$}\\
\tilde{a}_n(j,i) & \text{if $i> j$}.
\end{cases}
\]
 Let $b=(b_n)_n$ be a sequence of $n$-bandwidths and $(X_n)_n$ be the periodic random band matrices which are based on $(a_n)_n$ with bandwidth $b$. Then the following statements hold:
 \begin{enumerate}[i)]
 \item The triangular scheme $(a_n)_n$ is $\frac{1}{2}$-almost uncorrelated.
 \item If $b_n\to\infty$ as $n\to\infty$, then the semicircle law holds for $(X_n)_n$ in probability.
 \item If $\frac{1}{b_n^3}$ is summable over $n$, then the semicircle law holds almost surely for $(X_n)_n$. Observe that this statement implies in particular to full matrices ($b_n=n$).
 \end{enumerate}

\end{theorem}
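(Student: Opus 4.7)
The strategy is to establish (i) first, since (ii) and (iii) then follow as direct applications of Theorem~\ref{thm:main}.

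For (i), the cleanest route is to invoke the result of \autocite{HKW} that the symmetrized Curie-Weiss triangular scheme is approximately uncorrelated in the sense of Definition~\ref{def:almostuncorrelated}, and then apply Lemma~\ref{lem:translation} to convert this into $\tfrac{1}{2}$-almost uncorrelatedness. Alternatively, one can verify (AAU1) and (AAU2) directly. Since $a_n(i,j)^2\equiv 1$, condition (AAU2) holds trivially with $C^{(l)}_n=0$. For (AAU1), the identity $a_n(i,j)^\delta = a_n(i,j)$ (if $\delta$ is odd) or $1$ (if $\delta$ is even) reduces the left-hand side of \eqref{eq:alphaone} to
\[
\bigabs{\E[\tilde a_n(r_1,s_1)\cdots\tilde a_n(r_m,s_m)]},
\]
which is a Curie-Weiss$(\beta,n^2)$ moment over $m:=\#\{i:\delta_i\text{ odd}\}$ pairwise distinct spins; distinctness of the spin indices follows because the pairs $(p_i,q_i)$ are fundamentally different.

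The key ingredient is therefore the Curie-Weiss moment bound $|\E[Y_{k_1}\cdots Y_{k_m}]|\le K_m\,N^{-m/4}$ for distinct indices in Curie-Weiss$(\beta,N)$ with $\beta\in(0,1]$. I would derive it through the Hubbard--Stratonovich representation of the Curie-Weiss measure as a mixture of product Bernoulli-type laws with mean $\tanh(t)$, where $t$ has Lebesgue density proportional to $\exp(-N F_\beta(t))$ for a rate function $F_\beta$ having a quadratic minimum at $0$ for $\beta<1$ and a quartic minimum at $0$ for the critical case $\beta=1$; Laplace asymptotics on the one-dimensional integral $\int\tanh(t)^m e^{-N F_\beta(t)}\,\de t$ then produce the bound. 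Specialising to $N=n^2$ and using $s_1:=\#\{i:\delta_i=1\}\le m$ yields $K_m/n^{m/2}\le K_m/n^{s_1/2}$, so (AAU1) holds with $\alpha=\tfrac{1}{2}$ upon setting $C_{\delta_1+\cdots+\delta_l}:=\max_{m\le\delta_1+\cdots+\delta_l}K_m$.

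With (i) in hand, (ii) is a direct instance of Theorem~\ref{thm:main}(1) (using $\alpha=\tfrac{1}{2}$ and $b_n\to\infty$), while (iii) is an instance of Theorem~\ref{thm:main}(2), whose remaining hypotheses---$\{-1,1\}$-valued entries and summability of $1/b_n^3$---are built into our assumptions. The principal difficulty lies in the critical Curie-Weiss moment estimate at $\beta=1$, where the quartic rate function makes the Laplace asymptotics genuinely more subtle than the Gaussian approximation available for $\beta<1$, and where the bounds must remain uniform in both $N$ and in the number of spins $m$. This is, however, precisely the content of the estimates already established in \autocite{HKW} and surveyed in \autocite{KirschMomentSurvey}, so in practice the remaining work reduces to adapting those bounds to the combinatorial bookkeeping demanded by Definition~\ref{def:alphaalmostuncorrelated}.
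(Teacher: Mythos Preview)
Your proposal is correct and follows essentially the same approach as the paper: for (i) you invoke \autocite{HKW} to get approximate uncorrelatedness and then apply Lemma~\ref{lem:translation}, and for (ii) and (iii) you apply parts 1 and 2 of Theorem~\ref{thm:main}. The paper's proof is exactly this, stated in three sentences; your alternative direct verification of (AAU1)--(AAU2) via the Hubbard--Stratonovich representation and Laplace asymptotics is not needed here (it merely re-derives what \autocite{HKW} already established), but it is accurate and the bookkeeping $s_1\le m$, $C_{\sum\delta_i}:=\max_{m\le\sum\delta_i}K_m$ is sound.
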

\begin{proof}
In \autocite{HKW} it was shown that 	$(a_n)_n$ is approximately uncorrelated.
Thus i) follows from Lemma~\ref{lem:translation}. Statements ii) and iii) follow from the first two claims of Theorem~\ref{thm:main}.
\end{proof}

\subsection{Correlated Gaussian Entries}

We now study random matrices filled with correlated Gaussian entries. By placing quite natural conditions on their covariance matrices
we obtain $\alpha$-almost uncorrelated ensembles.
We begin with a lemma that helps to validate the forth moment condition (AAU3) that is essential for proving the almost sure semicircle law.

\begin{lemma}\label{lem:gettoaauthree}
Let $(a_n)_{n\in\N}$ be a triangular scheme and suppose that there exists a $K\in\R$ such that for all $l,N\in\N$ and fundamentally different pairs $(p_1,q_1),\ldots,(p_{l},q_{l})$ in $\sqr{N}$ we have that
\begin{equation}\label{eq:gettoaauthree}
\forall n\geq N:~\abs{\E a_n(p_1,q_1)^4 a_n(p_2,q_2)^2\cdots a_n(p_l,q_l)^2 - K} \leq \tilde{D}^{(l)}_n,
\end{equation}
where for each $l\in\N$, $(\tilde{D}^{(l)}_n)_n$ is a sequence converging to zero. Then (AAU3) is also satisfied with constants $D^{(l)}_n\defeq\tilde{D}^{(l)}_n+\tilde{D}^{(1)}_n$.
\end{lemma}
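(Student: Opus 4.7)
The target bound in (AAU3) concerns
\[
\E a_n(p_1,q_1)^4\bigl(a_n(p_2,q_2)^2\cdots a_n(p_l,q_l)^2 - 1\bigr)
= \E a_n(p_1,q_1)^4 a_n(p_2,q_2)^2\cdots a_n(p_l,q_l)^2 \;-\; \E a_n(p_1,q_1)^4,
\]
so the plan is simply to compare each of the two summands on the right-hand side to the universal constant $K$ and conclude by the triangle inequality. First I would apply the hypothesis \eqref{eq:gettoaauthree} directly to the mixed moment, obtaining
\[
\bigl|\E a_n(p_1,q_1)^4 a_n(p_2,q_2)^2\cdots a_n(p_l,q_l)^2 - K\bigr| \leq \tilde{D}^{(l)}_n.
\]

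The second, and only nontrivial, observation is that the pure fourth moment $\E a_n(p_1,q_1)^4$ can also be controlled by $K$ using the same hypothesis in the degenerate case $l=1$: with only one index pair in play the assumed estimate reads $|\E a_n(p_1,q_1)^4 - K| \leq \tilde{D}^{(1)}_n$. Combining these two estimates via the triangle inequality then yields
\[
\bigl|\E a_n(p_1,q_1)^4\bigl(a_n(p_2,q_2)^2\cdots a_n(p_l,q_l)^2 - 1\bigr)\bigr|
\leq \tilde{D}^{(l)}_n + \tilde{D}^{(1)}_n,
\]
which is exactly (AAU3) with $D^{(l)}_n \defeq \tilde{D}^{(l)}_n + \tilde{D}^{(1)}_n$. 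Both $\tilde{D}^{(l)}_n$ and $\tilde{D}^{(1)}_n$ tend to $0$ by assumption, so the defining property $D^{(l)}_n \to 0$ is automatic. There is no real obstacle here; the only subtlety worth flagging is that the hypothesis must be valid in the edge case $l=1$ (i.e., when the product of squared factors is empty), which is the mechanism by which $K$ is identified as the common limit of the fourth moments.
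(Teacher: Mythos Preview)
Your proof is correct and is essentially identical to the paper's own argument: the paper also expands the expectation, inserts and removes the constant $K$, and applies the hypothesis once with $l$ pairs and once in the degenerate case $l=1$ to conclude via the triangle inequality.
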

\begin{proof}
The lemma follows from the basic estimate
\begin{multline*}
\abs{\E a_n(p_1,q_1)^4\cdot (a_n(p_2,q_2)^2\cdots a_n(p_l,q_l)^2-1)}\\ \leq \abs{\E a_n(p_1,q_1)^4 a_n(p_2,q_2)^2\cdots a_n(p_l,q_l)^2-K} + \abs{\E a_n(p_1,q_1)^4 -K}
\leq \tilde{D}^{(l)}_n + \tilde{D}^{(1)}_n.
\end{multline*}
\end{proof}

What follows is a generalization of the first author's work in \autocite{FleermannMT}. For any $\alpha>0$ we denote by $\CovMat(\alpha)$\label{sym:specialcovmat} the set of all sequences $(\Sigma_n)_n$, where for each $n\in\N$, $\Sigma_n$ is a real symmetric $n\times n$ matrix with the following properties:
\begin{enumerate}[i)]
	\item $\Sigma_n(i,i) = 1$ for all $i\in\oneto{n}$,
	\item $\abs{\Sigma_n(i,j)} \leq 1/n^{\alpha}$ for all $i\neq j\in\oneto{n}$.
	\item $\Sigma_n$ is positive definite.
\end{enumerate}

Note for $\alpha\geq 1$ that any symmetric $n\times n$ matrix $A$ satisfying above conditions i) and ii) is \emph{strictly diagonally dominant} with strictly positive diagonal entries and therefore satisfies also condition iii) (see e.g.~\autocite{Plato}). The next theorem is known as "Wick's theorem" or "Theorem of Isserlis" (see e.g.~\autocite{Speicher} or \autocite{Isserlis}). The theorem is useful for computing the expectation of an arbitrary product of multi-dimensional normal random variables.




\begin{theorem}
\label{thm:wick}
Let $n\in\N$ and $\Sigma$ be a positive definite, real symmetric $n\times n$ matrix. For the real-valued random variables $Y_1,\ldots, Y_n$ on $(\Omega,\Acal,\Prob)$ it is assumed that $(Y_1,\ldots,Y_n)\sim \mathcal{N}(0_n,\Sigma)$ (where $0_n$ is the n-dimensional vector containing only zeroes). Then for all $k\in\N$ and $i(1),\ldots,i(k)\in\oneto{n}$, we have that
\[
\E Y_{i(1)}\cdots Y_{i(k)} = \sum_{\pi\in\PePe{k}}\prod_{\{r,s\}\in\pi}\E Y_{i(r)}Y_{i(s)} = \sum_{\pi\in\PePe{k}}\prod_{\{r,s\}\in\pi} \Sigma \big(i(r),i(s)\big),
\]
where $\PePe{k}$\label{sym:pairpartitions} denotes the set of all pair partitions on $\{1,\ldots,k\}$. Especially, we obtain for $k$ odd that
\[
\E Y_{i(1)}\cdots Y_{i(k)} = 0.
\]
\end{theorem}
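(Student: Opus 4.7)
The plan is to prove Wick's theorem by differentiating the moment generating function of the multivariate Gaussian. First I would recall that for $(Y_1,\ldots,Y_n)\sim \mathcal{N}(0_n,\Sigma)$ with $\Sigma$ positive definite, the joint moment generating function is
\[
M(t_1,\ldots,t_n)= \E\exp\left(\sum_{j=1}^n t_j Y_j\right)=\exp\left(\frac{1}{2}\sum_{r,s=1}^n t_r t_s \Sigma(r,s)\right),
\]
a standard consequence of completing the square in the Gaussian density. Since $M$ is analytic in a neighbourhood of $0$, moments are recovered by the identity
\[
\E Y_{i(1)}\cdots Y_{i(k)} = \frac{\partial^k M}{\partial t_{i(1)}\cdots \partial t_{i(k)}}\bigg|_{t=0}
\]
for all $i(1),\ldots,i(k)\in\oneto{n}$.

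Next I would expand $M$ as the power series $M(t)=\sum_{m=0}^{\infty}\frac{1}{m!\,2^m}\bigl(\sum_{r,s} t_r t_s\Sigma(r,s)\bigr)^m$. Since $t^{T}\Sigma t$ is homogeneous of degree $2$ in $t$, only monomials of even total degree appear in $M$; hence the $k$-fold mixed derivative at $t=0$ vanishes whenever $k$ is odd, which gives the final assertion of the theorem. For $k=2m$ even only the summand of index $m$ contributes at $t=0$, and the task reduces to computing
\[
\frac{1}{m!\,2^m}\,\frac{\partial^{2m}}{\partial t_{i(1)}\cdots \partial t_{i(2m)}}\left(\sum_{r,s=1}^n t_r t_s\,\Sigma(r,s)\right)^{\!m}\bigg|_{t=0}.
\]

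Expanding the $m$-th power, the expression becomes a sum over tuples $(r_1,s_1,\ldots,r_m,s_m)$ of products $\prod_{l=1}^{m}\Sigma(r_l,s_l)$ multiplied by the monomial $\prod_{l=1}^{m} t_{r_l} t_{s_l}$. The derivative at $t=0$ picks out precisely those tuples in which $(r_1,s_1,\ldots,r_m,s_m)$ is a permutation of $(i(1),\ldots,i(2m))$. Each such tuple determines a pair partition $\pi\in\PePe{2m}$ of the index positions $\{1,\ldots,2m\}$, namely the set of pairs occupying the slots $\{r_l,s_l\}$ for $l=1,\ldots,m$. Conversely, every $\pi\in\PePe{2m}$ arises from exactly $m!\cdot 2^m$ such tuples, due to the $m!$ possible orderings of the $m$ pairs and the $2^m$ ways of transposing $r_l\leftrightarrow s_l$ inside each pair; the latter give identical contributions by symmetry of $\Sigma$. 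The factor $m!\cdot 2^m$ then cancels the prefactor $\frac{1}{m!\,2^m}$, leaving $\sum_{\pi\in\PePe{2m}}\prod_{\{r,s\}\in\pi}\Sigma(i(r),i(s))$, which is the claim.

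The main obstacle is the combinatorial bookkeeping in the last step: one must verify carefully that pair partitions of $\{1,\ldots,2m\}$ are exactly double-counted by the pair $(m!,2^m)$ so that the numerical factors cancel cleanly, and that no additional monomials survive the evaluation at $t=0$. A more elegant alternative that avoids this explicit counting is an induction on $k$ based on the Gaussian integration-by-parts identity $\E[Y_j F(Y)]=\sum_{i}\Sigma(j,i)\,\E[\partial_i F(Y)]$ applied to $F(Y)=Y_{i(2)}\cdots Y_{i(k)}$: this reduces the $k$-point moment to a weighted sum of $(k-2)$-point moments in which the surviving pair $\{i(1),i(l)\}$ is singled out, and the recursion matches exactly the recursive structure of pair partitions.
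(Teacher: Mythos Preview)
Your argument is correct and is one of the two standard proofs of Isserlis' theorem. The paper itself does \emph{not} prove Theorem~\ref{thm:wick}; it is stated as a known result with references to the literature, so there is no ``paper's own proof'' to compare against.

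A minor expository remark: the sentence ``the derivative at $t=0$ picks out precisely those tuples in which $(r_1,s_1,\ldots,r_m,s_m)$ is a permutation of $(i(1),\ldots,i(2m))$'' is slightly loose when the indices $i(1),\ldots,i(2m)$ are not all distinct, since then several tuples survive and each is counted with a multiplicity. The clean way to phrase your counting is to sum first over bijections $\phi$ from the $2m$ derivative slots to the $2m$ factor positions; each such $\phi$ forces a unique tuple $(r_l,s_l)=(i(\phi^{-1}(l,r)),i(\phi^{-1}(l,s)))$, contributes $\prod_l \Sigma(i(a_l),i(b_l))$, and induces the pair partition $\{\{a_l,b_l\}\}_l$ of the \emph{positions} $\{1,\ldots,2m\}$. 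Exactly $m!\,2^m$ bijections induce each pair partition, which is the cancellation you state. With that rewording the argument is airtight, and your closing remark about the integration-by-parts induction is indeed the cleaner alternative.
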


Next we define a class of triangular schemes $(a_n)_{n\in\N}$ built from correlated Gaussian entries where all correlations between different entries are bounded in modulus by $2^{\alpha}/n^{2\alpha}$ for some prescribed parameter $\alpha>0$. Note that, unlike the class of Curie-Weiss ensembles, this class features entries which are not necessarily exchangeable random variables. This is also the reason why we are more careful in defining how the random variables fill the triangular scheme.

\begin{example}
\label{ex:fubsp}
Let $\alpha>0$ be arbitrary.
Due to symmetry, it suffices to specify the right upper triangle of each $a_n$ in the triangular scheme $(a_n)_{n\in\N}$ that has $\Delta_n \defeq n(n+1)/2$ entries.
Fix a sequence $(\Sigma_n)_n\in\CovMat(\alpha)$.
We fill the right upper triangle of each $a_n$ using all the entries from a random vector $(Y^{(n)}_1,\ldots,Y^{(n)}_{\Delta_n})\sim \mathcal{N}(0_{\Delta_n},\Sigma_{\Delta_n})$.
To this end, for each $n\in\N$ we fix a map
$\varphi_n : \sqr{n}  \longrightarrow \oneto{\Delta_n}$ such that $\varphi_n$ restricted to $\{(i,j)\in\sqr{n} : i\leq j\}$ is a bijection and such that $\varphi_n(i,j) = \varphi_n(j,i)$ for all $(i,j)\in\sqr{n}$ with $j<i$. The triangular scheme is then defined via $a_n(i,j)\defeq Y^{(n)}_{\varphi_n(i,j)}$ for all $(i,j)\in\sqr{n}$.

\end{example}

The next three lemmas help us to show that the just defined triangular schemes are strongly $\alpha$-almost uncorrelated.


\begin{lemma}
\label{lem:bspfueins}
Let  $l\in \N$, $\alpha>0$, and $(\Sigma_n)_n\in\CovMat(\alpha)$ be arbitrary. Then choose $\delta_1,\ldots,\delta_l\in\N$ so that $\delta_1+\ldots+\delta_l$ is even. Furthermore, we fix $n\in\N$ with $l \leq \Delta_n$ and pick integers
$i(1),i(2),\ldots,i(\delta_1+\ldots+\delta_l) \in \oneto{\Delta_n}$ so that
\begin{align*}
&i(1)=i(2)=\ldots=i(\delta_1)\\
&i(\delta_1+1)=\ldots= i(\delta_1+\delta_2)\\
&i(\delta_1+\delta_2+1)=\ldots=i(\delta_1+\delta_2+\delta_3)\\
&\vdots\\
&i(\delta_1+\ldots+\delta_{l-1}+1)=\ldots=i(\delta_1+\ldots+\delta_l)
\end{align*}
but $i(1)$, $i(\delta_1+1)$, $\ldots$, $i(\delta_1 + \ldots + \delta_{l-1}+1)$ are pairwise distinct. Then we have
\[
\bigabs{\sum_{\pi\in\PePe{\delta_1+\ldots+\delta_l}}\prod_{\{r,s\}\in\pi} \Sigma_{\Delta_n}\big(i(r),i(s)\big)} \leq  \frac{\#\PePe{\delta_1+\ldots+\delta_l}}{(n/\sqrt{2})^{\alpha\cdot\#\{i\,|\,\delta_i =1\}}}.
\]
\end{lemma}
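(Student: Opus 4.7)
The plan is to bring the absolute value inside the sum by the triangle inequality and then bound each summand uniformly in $\pi \in \PePe{\delta_1+\ldots+\delta_l}$. Write $k \defeq \#\{i\,|\,\delta_i = 1\}$ for the number of singleton blocks in the equality pattern of the indices. I would exploit the two defining properties of $(\Sigma_n)_n \in \CovMat(\alpha)$: each factor $\Sigma_{\Delta_n}\big(i(r), i(s)\big)$ is equal to $1$ when $i(r) = i(s)$ and bounded in modulus by $1/\Delta_n^{\alpha}$ when $i(r) \neq i(s)$.

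The combinatorial heart of the argument is the claim that, for every pair partition $\pi$, at least $k/2$ of its pairs $\{r,s\}$ are \emph{mixed}, i.e.\ satisfy $i(r) \neq i(s)$. Each singleton block has its unique element paired (via $\pi$) with an element of a different block, and because the representative indices $i(1), i(\delta_1+1), \ldots, i(\delta_1+\ldots+\delta_{l-1}+1)$ are pairwise distinct by hypothesis, every such pair is necessarily mixed. A short case distinction splitting the $k$ singletons according to whether their $\pi$-partner lies in another singleton block ($k_1$ of them) or in a non-singleton block ($k_2 = k - k_1$) produces $k_1/2 + k_2 = k - k_1/2 \geq k/2$ mixed pairs involving singletons alone.

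Combining these two ingredients yields
\[
\bigabs{\prod_{\{r,s\} \in \pi} \Sigma_{\Delta_n}\big(i(r), i(s)\big)} \leq \left(\frac{1}{\Delta_n^{\alpha}}\right)^{k/2},
\]
where the possibly fractional exponent is handled by noting $1/\Delta_n^{\alpha} \leq 1$. Substituting the trivial bound $\Delta_n = n(n+1)/2 \geq n^2/2$ converts the right-hand side to $(n/\sqrt{2})^{-\alpha k}$, matching the exponent in the target inequality. Summing over the $\#\PePe{\delta_1+\ldots+\delta_l}$ pair partitions via the triangle inequality then gives the claim.

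The only nontrivial step is the combinatorial lower bound on the number of mixed pairs; once that is in place, everything else reduces to a direct application of the bounds from the definition of $\CovMat(\alpha)$ together with elementary estimates on $\Delta_n$.
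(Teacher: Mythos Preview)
Your proposal is correct and follows essentially the same approach as the paper: triangle inequality, bound each factor by $1$ or $1/\Delta_n^{\alpha}$, then argue combinatorially that every pair partition contains at least $k/2$ mixed pairs, and finish with $\Delta_n \geq n^2/2$. Your case distinction via $k_1$ and $k_2$ makes the lower bound on mixed pairs slightly more explicit than the paper's ``worst case'' remark, but the argument is the same.
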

\begin{proof}
Each diagonal entry of $\Sigma_{\Delta_n}$ equals $1$ and each off-diagonal entry lies in the interval $[-1/\Delta_n^{\alpha}, 1/\Delta_n^{\alpha}]$. Let $\pi\in\PePe{\delta_1+\ldots+\delta_l}$ be arbitrary, then we have that
$\prod_{\{r,s\}\in\pi} \Sigma_{\Delta_n}(i(r),i(s))$
is a product of  $(\delta_1+\ldots+\delta_l)/2$ entries of $\Sigma_{\Delta_n}$. For each block $\{r,s\}\in\pi$ with $i(r)=i(s)$ we find $\Sigma_{\Delta_n}\big(i(r),i(s)\big)=1$, and for each block $\{r,s\}\in\pi$ with $i(r)\neq i(s)$ we obtain $\abs{\Sigma_{\Delta_n}\big(i(r),i(s)\big)}\leq 1/\Delta_n^{\alpha}$. But now we have at least
$\#\{i\,|\,\delta_i = 1\}/2$
blocks $\{r,s\}$ in $\pi$ with $i(r)\neq i(s)$, since for any $\delta_i$ with $\delta_i=1$, the index $i(\delta_1+\ldots+\delta_i)$ is unique among all indices and must therefore share a block with a different index. Then in the worst case possible, the number of $\delta_i$'s with $\delta_i=1$ is even and their corresponding unique indices are all paired, yielding the lower bound on the number of non-diagonal entries in the product. As $\Delta_n > n^2/2$ we arrive at
\[
\prod_{\{r,s\}\in\pi} \abs{\Sigma_{\Delta_n}\big(i(r),i(s)\big)} \leq \left(\frac{1}{\Delta_n^{\alpha}}\right)^{\frac{\#\{i|\delta_i =1\}}{2}} \leq\; \frac{1}{(n/\sqrt{2})^{\alpha\cdot\#\{i\,|\,\delta_i =1\}}} \,.
\]
This bound holds for each $\pi\in\PePe{\delta_1+\ldots+\delta_l}$, proving the lemma.
\end{proof}

\begin{lemma}
\label{lem:bspfuzwei}
Let $\alpha>0$ and $(\Sigma_n)_n\in\CovMat(\alpha)$ be arbitrary. Let $n, z\in \N$ satisfy $z \leq \Delta_n$. Choose $i(1),\ldots,i(2z)$ in $\oneto{\Delta_n}$, so that $i(1)=i(2), i(3)=i(4), \ldots, i(2z-1)=i(2z)$, and so that $i(1),i(3),\ldots,i(2z-1)$ are pairwise distinct. Then it holds:
\[
\bigabs{\sum_{\pi\in\PePe{2z}}\prod_{\{r,s\}\in\pi} \Sigma_{\Delta_n}\big(i(r),i(s)\big) - 1} \leq \frac{\#\PePe{2z}}{(n/\sqrt{2})^{4\alpha}}.
\]
\end{lemma}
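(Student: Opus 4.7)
The plan is to single out the unique pair partition that contributes the constant $1$, namely $\pi_0 \defeq \{\{1,2\},\{3,4\},\ldots,\{2z-1,2z\}\}$. Because each block of $\pi_0$ pairs two positions with identical index, each factor $\Sigma_{\Delta_n}(i(r),i(s))$ equals $1$ by property i) of $\CovMat(\alpha)$, so the contribution of $\pi_0$ to the sum is exactly $1$ and cancels the $-1$ on the left-hand side. The task then reduces to bounding the sum over the partitions $\pi \neq \pi_0$.

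For such a $\pi$ I would classify its blocks as \emph{natural} (of the form $\{2k-1,2k\}$) or \emph{non-natural}. Since the indices $i(1), i(3),\ldots, i(2z-1)$ are pairwise distinct, a block $\{r,s\}$ satisfies $i(r)=i(s)$ if and only if it is natural, and thus contributes a factor of $1$; every non-natural block has $i(r)\neq i(s)$ and therefore contributes a factor bounded in modulus by $1/\Delta_n^{\alpha}$ thanks to property ii).

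The combinatorial heart of the argument is the observation that every $\pi\neq\pi_0$ must contain \emph{at least two} non-natural blocks: as soon as one natural pair $\{2k-1,2k\}$ is broken, both $2k-1$ and $2k$ are forced into blocks with elements drawn from other natural pairs, so at least two blocks deviate from $\pi_0$. This gives
\[
\Bigl|\prod_{\{r,s\}\in\pi}\Sigma_{\Delta_n}\bigl(i(r),i(s)\bigr)\Bigr| \leq \frac{1}{\Delta_n^{2\alpha}}.
\]
Using $\Delta_n = n(n+1)/2 > n^2/2$, this is at most $1/(n/\sqrt{2})^{4\alpha}$. Summing the bound over the at most $\#\PePe{2z} - 1$ partitions $\pi\neq\pi_0$ yields the claim.

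I expect the only non-routine point to be the "at least two non-natural blocks" step; the remainder consists of a direct application of the entrywise bounds from $\CovMat(\alpha)$ together with the index structure, entirely parallel to the reasoning already developed in Lemma~\ref{lem:bspfueins}.
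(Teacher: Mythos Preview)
Your proof is correct and follows essentially the same approach as the paper's: both isolate the single partition $\pi_0=\{\{1,2\},\ldots,\{2z-1,2z\}\}$ yielding the summand $1$, argue that any other $\pi$ has at least two blocks with $i(r)\neq i(s)$ and hence contributes at most $\Delta_n^{-2\alpha}$, and then sum over at most $\#\PePe{2z}$ such partitions.
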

\begin{proof}
Consider the pair partition $\pi_0 = \left\{\{1,2\},\{3,4\},\ldots,\{2z-1,2z\}\right\}\in\PePe{2z}$. This is the only pair partition in $\PePe{2z}$
so that only diagonal entries of $ \Sigma_{\Delta_n}$ appear in the product.
Thus $\prod_{\{r,s\}\in\pi_0} \Sigma_{\Delta_n}\big(i(r),i(s)\big) = 1$.
For each $\pi\in\PePe{2z}$ with $\pi\neq\pi_0$ we find a block $\{r',s'\}\in\pi$ with $i(r')\neq i(s')$, and then necessarily at least one further block $\{r'',s''\}\in\pi$ with $i(r'')\neq i(s'')$, leading to
$\prod_{\{r,s\}\in\pi} \abs{\Sigma_{\Delta_n}\big(i(r),i(s)\big)}\leq \frac{1}{\Delta_n^{2\alpha}}$.There are at most $\#\PePe{2z}$ partitions $\pi\in\PePe{2z}$ with $\pi\neq\pi_0$, which concludes the proof.
\end{proof}

\begin{lemma}
\label{lem:bspfudrei}
Let $\alpha>0$ and $(\Sigma_n)_n\in\CovMat(\alpha)$ be arbitrary.
 Let $n, z\in \N$ with $z \leq \Delta_n$. Choose $i(1),\ldots,i(2z+2)$ in $\oneto{\Delta_n}$, so that $i(1)=i(2)=i(3)=i(4), i(5)=i(6), i(7) = i(8), \ldots, i(2z+1)=i(2z+2)$, and so that $i(1),i(5),i(7),\ldots,i(2z+1)$ are pairwise distinct. Then it holds:
\[
\bigabs{ \sum_{\pi\in\PePe{2z+2}}\prod_{\{r,s\}\in\pi} \Sigma_{\Delta_n}\big(i(r),i(s)\big) - 3} \leq \frac{\#\PePe{2z+2}}{(n/\sqrt{2})^{4\alpha}}.
\]
\end{lemma}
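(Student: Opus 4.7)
The plan is to mimic the strategy of Lemma~\ref{lem:bspfuzwei}, this time identifying precisely three pair partitions whose product of covariance entries equals $1$, and showing that every other pair partition forces at least two off-diagonal entries of $\Sigma_{\Delta_n}$ into the product.

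First I would classify the pair partitions $\pi\in\PePe{2z+2}$ according to whether every block $\{r,s\}\in\pi$ satisfies $i(r)=i(s)$. Call such a $\pi$ \emph{diagonal}, since in that case every factor in $\prod_{\{r,s\}\in\pi} \Sigma_{\Delta_n}(i(r),i(s))$ equals $1$. The hypothesis on the indices groups $\{1,\ldots,2z+2\}$ into color classes by value of $i$: one class $\{1,2,3,4\}$ of size $4$, and $z$ classes $\{2k+1,2k+2\}$ for $k=2,\ldots,z+1$ of size $2$. A diagonal $\pi$ must refine this color partition, so on each two-element class the only choice is to pair $2k+1$ with $2k+2$, while on $\{1,2,3,4\}$ there are exactly three pair partitions, namely $\{\{1,2\},\{3,4\}\}$, $\{\{1,3\},\{2,4\}\}$, $\{\{1,4\},\{2,3\}\}$. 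Hence there are exactly three diagonal partitions, each contributing $1$ to the sum, accounting for the constant $3$ to be subtracted.

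Next I would argue that any non-diagonal $\pi$ yields a product bounded by $1/\Delta_n^{2\alpha}$ by showing that such a $\pi$ must contain at least two off-diagonal blocks. Suppose for contradiction that $\pi$ has exactly one block $\{r,s\}$ with $i(r)\neq i(s)$, while every other block lies inside one color class. Removing $r$ and $s$ leaves every other color class intact, but reduces the class of $i(r)$ (respectively $i(s)$) in size by one. Each of the remaining color classes must then admit a perfect pair partition from the restriction of $\pi$, so each must have even cardinality. In our setup the sizes are $4,2,2,\ldots,2$, all even; removing one element from any class of size $2$ or $4$ creates a class of odd cardinality ($1$ or $3$), which cannot be refined by pairs. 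This contradiction shows that every non-diagonal $\pi$ has at least two blocks $\{r,s\}$ with $i(r)\neq i(s)$, so
\[
\Bigl|\prod_{\{r,s\}\in\pi}\Sigma_{\Delta_n}(i(r),i(s))\Bigr| \leq \frac{1}{\Delta_n^{2\alpha}} \leq \frac{1}{(n/\sqrt 2)^{4\alpha}},
\]
where the final inequality uses $\Delta_n = n(n+1)/2 \geq n^2/2$.

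Combining the two parts, the difference $\sum_{\pi}\prod_{\{r,s\}\in\pi}\Sigma_{\Delta_n}(i(r),i(s)) - 3$ consists entirely of contributions from non-diagonal partitions, of which there are at most $\#\PePe{2z+2}$, each bounded in absolute value by $(n/\sqrt 2)^{-4\alpha}$, yielding the stated bound. The only slightly delicate step is the combinatorial case analysis above; everything else is bookkeeping of the same flavor as Lemmas~\ref{lem:bspfueins} and~\ref{lem:bspfuzwei}.
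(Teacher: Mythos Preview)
Your proposal is correct and follows essentially the same approach as the paper: identify the three ``diagonal'' pair partitions $\pi_1,\pi_2,\pi_3$ (differing only in how $\{1,2,3,4\}$ is paired) that each contribute $1$, and bound every remaining partition by $\Delta_n^{-2\alpha}$ via the observation that at least two off-diagonal covariance entries must appear. Your parity argument for the latter point is in fact more explicit than what the paper writes, which simply declares the proof ``analogous to the proof of Lemma~\ref{lem:bspfuzwei}'' and lists $\pi_1,\pi_2,\pi_3$.
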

\begin{proof}
The proof is analogous to the proof of Lemma~\ref{lem:bspfuzwei}. Here, the pair partitions
\begin{align*}
&\pi_1 = \left\{\{1,2\},\{3,4\},\{5,6\},\{7,8\},\ldots,\{2z+1,2z+2\}\right\}\\
&\pi_2 = \left\{\{1,3\},\{2,4\},\{5,6\},\{7,8\},\ldots,\{2z+1,2z+2\}\right\}\\
&\pi_3 = \left\{\{1,4\},\{2,3\},\{5,6\},\{7,8\},\ldots,\{2z+1,2z+2\}\right\}\\
\end{align*}
yield a summand of $1$, whereas all other partitions yield a summand of absolute value $\leq\Delta_n^{-2\alpha}$.
\end{proof}

Now we are ready to prove that Example~\ref{ex:fubsp} indeed provides strongly $\alpha$-almost uncorrelated triangular schemes.

\begin{theorem}\label{thm:fubsp}
Let $\alpha>0$ and $(\Sigma_n)_{n}\in\CovMat(\alpha)$ be arbitrary. Let the triangular scheme $(a_n)_{n\in\N}$ be constructed with respect to $\alpha$ and $(\Sigma_n)_{n}$ as described in Example~\ref{ex:fubsp}. Then  $(a_n)_{n\in\N}$ is strongly $\alpha$-almost uncorrelated, and if $\alpha>1/4$, then for all $l\in\N$, the sequences $C^{(l)}_n$ and $D^{(l)}_n$ can be chosen summable over $n$. Further, this property is tight in the sense that $(a_n)_{n\in\N}$ need not be $\alpha'$-almost uncorrelated for any $\alpha'>\alpha$.
\end{theorem}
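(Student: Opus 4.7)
The plan is to apply Wick's theorem (Theorem~\ref{thm:wick}) to rewrite each required mixed moment of $(a_n)_n$ as a sum over pair partitions of indices in the underlying Gaussian vector $(Y^{(n)}_1, \ldots, Y^{(n)}_{\Delta_n})$, and then control that sum with the three technical Lemmas~\ref{lem:bspfueins}, \ref{lem:bspfuzwei}, and \ref{lem:bspfudrei}. Condition (AAU3) is reached via Lemma~\ref{lem:gettoaauthree} with $K = 3$. The summability claim is then read off from the explicit $n^{-4\alpha}$ decay, and tightness is shown by exhibiting a concrete covariance sequence that saturates $|\Sigma_n(i,j)| \leq n^{-\alpha}$.

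For (AAU1), fix fundamentally different pairs $(p_1, q_1), \ldots, (p_l, q_l)$ and exponents $\delta_1, \ldots, \delta_l \in \N$, set $k \defeq \delta_1 + \ldots + \delta_l$, and unfold every power to obtain a product of $k$ factors $Y^{(n)}_{i(\cdot)}$. Injectivity of $\varphi_n$ on the upper triangle (together with its symmetric extension) guarantees the distinct-block-distinct-index requirement of Lemma~\ref{lem:bspfueins}. For $k$ odd, Wick's formula gives expectation zero, so any $C_k \geq 0$ works; for $k$ even, Wick combined with Lemma~\ref{lem:bspfueins} gives the bound with $C_k \defeq 2^{\alpha k/2} \, \#\PePe{k}$. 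The same template produces (AAU2) from Lemma~\ref{lem:bspfuzwei}, yielding $C^{(l)}_n \leq 2^{2\alpha}\,\#\PePe{2l}/n^{4\alpha}$. For (AAU3) the hypothesis of Lemma~\ref{lem:gettoaauthree} is verified by computing $\E a_n(p_1, q_1)^4 a_n(p_2, q_2)^2 \cdots a_n(p_l, q_l)^2$ via Wick: for $l \geq 2$ the indices fit Lemma~\ref{lem:bspfudrei} with $z = l-1$, giving $\tilde{D}^{(l)}_n \leq 2^{2\alpha}\,\#\PePe{2l+2}/n^{4\alpha}$, while the case $l = 1$ is trivial since $\E Y^4 = 3$ for any standard Gaussian~$Y$.

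All of the bounds $C^{(l)}_n$, $\tilde{D}^{(l)}_n$, and hence $D^{(l)}_n = \tilde{D}^{(l)}_n + \tilde{D}^{(1)}_n$, are of the form $\mathrm{const}(l) \cdot n^{-4\alpha}$, which is summable over $n$ precisely when $4\alpha > 1$, establishing the summability statement for $\alpha > 1/4$. For the tightness claim, take $\Sigma_n \defeq (1 - n^{-\alpha}) I_n + n^{-\alpha} J_n$, where $J_n$ is the $n \times n$ all-ones matrix. Its diagonal entries equal $1$, its off-diagonal entries are exactly $n^{-\alpha}$, and its eigenvalues $1 + (n-1)n^{-\alpha}$ and $1 - n^{-\alpha}$ are positive for $n \geq 2$, so $(\Sigma_n) \in \CovMat(\alpha)$. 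For any two fundamentally different pairs $(p, q), (p', q')$, injectivity of $\varphi_n$ yields $|\E a_n(p, q) a_n(p', q')| = \Delta_n^{-\alpha} \geq n^{-2\alpha}$; if $(a_n)_n$ were $\alpha'$-almost uncorrelated for some $\alpha' > \alpha$, condition (AAU1) with $l = 2$, $\delta_1 = \delta_2 = 1$ would force $n^{-2\alpha} \leq C_2 n^{-2\alpha'}$ for all large $n$, which is a contradiction.

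The main obstacle is the bookkeeping step of matching, for each of the three lemmas, the index pattern obtained by unfolding $a_n(p_i, q_i)^{\delta_i}$ to the exact hypothesis on the index block structure assumed in the lemma; once this correspondence is in hand, everything else reduces to direct substitution and comparing powers of $n$.
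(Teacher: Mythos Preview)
Your proposal is correct and follows essentially the same route as the paper: Wick's theorem plus Lemmas~\ref{lem:bspfueins}--\ref{lem:bspfudrei} for (AAU1)--(AAU3), with Lemma~\ref{lem:gettoaauthree} at $K=3$ for the fourth-moment condition, and the same all-ones-off-diagonal covariance for tightness. One small slip: in applying Lemma~\ref{lem:bspfudrei} the correct identification is $z=l$ (there are $2l+2$ factors and $l$ distinct indices), not $z=l-1$, though the bound $\#\PePe{2l+2}/n^{4\alpha}$ you wrote is the right one.
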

\begin{proof}
To prove property (AAU1) we only need to consider the case where $\delta_1+\ldots +\delta_l$ is even because of the last statement in
 Theorem~\ref{thm:wick}. So let $N\in\N$ and $l\in\N$ be arbitrary, choose $\delta_1,\ldots,\delta_l\in\N$ so that $\delta_1+\ldots +\delta_l$ is even, and fix a sequence of fundamentally different pairs $(p_1,q_1),\ldots,(p_l,q_l)\in\sqr{N}$. Then the indices $i(1),i(2),\ldots,i(\delta_1+\ldots+\delta_n)$ defined through
\begin{align*}
&i(1), i(2), \ldots, i(\delta_1) \defeq \varphi_n(p_1,q_1)\\
&i(\delta_1+1), i(\delta_1 +2), \ldots, i(\delta_1 +\delta_2) \defeq \varphi_n(p_2,q_2)\\
&\vdots\\
&i(\delta_1+\ldots + \delta_{l-1} + 1), \ldots, i(\delta_1+\ldots + \delta_{l-1} + \delta_{l}) \defeq \varphi_n(p_l,q_l)
\end{align*}
meet the conditions of Lemma~\ref{lem:bspfueins}. With Lemma~\ref{lem:bspfueins}, Theorem~\ref{thm:wick} and the definition $a_n(p_i,q_i)=Y^{(n)}_{\varphi_n(p_i,q_i)}$ of Example~\ref{ex:fubsp} we conclude
\begin{eqnarray*}
\bigabs{\E a_n(p_1,q_1)^{\delta_1}\cdot a_n(p_{2},q_{2})^{\delta_2}\cdot \ldots \cdot a_n(p_{l},q_{l})^{\delta_l}}
&=&\bigabs{\E  Y^{(n)}_{i(1)}\cdots Y^{(n)}_{i(\delta_1+\ldots+\delta_{l})}}\\
= \bigabs{\sum_{\pi\in\PePe{\delta_1+\ldots+\delta_l}}\prod_{(r,s)\in\pi} \Sigma_{\Delta_n}(i(r),i(s))}&\leq&\frac{\#\PePe{\delta_1+\ldots+\delta_l}}{(n/\sqrt{2})^{\alpha\cdot\#\{j\,|\,\delta_j=1\}}}
\end{eqnarray*}
Thus (AAU1) holds with
 $C_{k} \defeq  \#\PePe{k} 2^{\alpha k/2}$.

Condition (AAU2) is shown similarly by using Lemma~\ref{lem:bspfuzwei}, Theorem~\ref{thm:wick} and by defining $C^{(l)}_n\defeq 4^{\alpha}\frac{\#\PePe{2l}}{n^{4\alpha}}$, which are summable over $n$ iff $\alpha>1/4$.
For condition (AAU3) we use that Lemma~\ref{lem:bspfudrei} implies by a similar reasoning that the hypotheses of Lemma~\ref{lem:gettoaauthree} are satisfied with $K = 3$ and  $\tilde{D}^{(l)}_{n} = 4^{\alpha} \frac{\# \PePe{2l+2}}{n^{4\alpha}}$. Hence (AAU3) holds with $D^{(l)}_n=4^{\alpha} \frac{\# \PePe{2l+2} + \# \PePe{4}}{n^{4\alpha}}$, which again is summable over $n$ iff $\alpha>1/4$.

To prove the last claim of the theorem choose  $\Sigma_n(i,j)=1/n^{\alpha}$ for all $n\in\N$ and $i\neq j\in\oneto{n}$. Observe that $\Sigma_n$ is positive definite as a sum of a positive semi-definite matrix and a positive multiple of the identity matrix. We obtain $\abs{\E a_n(1,1)a_n(1,2)}  = \Delta_n^{-\alpha}$ for all $n\geq 2$. Thus (AAU1) cannot hold for any $\alpha'>\alpha$ in the case $l=2$ with $\delta_1=\delta_2=1$.
\end{proof}


The following theorem summarizes our main results for correlated Gaussian entries. These are novel results when compared with previous work on Gaussian entries, such as \autocite{MonvelKhorunzhy1999}, \autocite{BannaMerlevede2015} and \autocite{ChakrabartyHazra2016}, where different correlation structures are assumed.

\begin{theorem}
\label{cor:fubsp}
Let $\alpha>0$ and $(\Sigma_n)_n\in\CovMat(\alpha)$ be arbitrary, and let $(a_n)_n$ be the triangular scheme filled with correlated Gaussian entries as described in Example~\ref{ex:fubsp}.
Let $b=(b_n)_n$ be a sequence of bandwidths with $b_n\to\infty$ and $(X_n)_n$ be the periodic random band matrices based on $(a_n)_n$ with bandwidth $b$. Then the following statements hold:

\begin{enumerate}[i)]
\item If $\alpha\geq 1/2$, then the semicircle law holds in probability for $(X_n)_n$.
\item If $\alpha>1/2$, and $(\frac{1}{b_n^2})_n$ is summable then the semicircle law holds almost surely for $(X_n)_n$. Observe that this statement implies in particular to full matrices ($b_n=n$).
\end{enumerate}
\end{theorem}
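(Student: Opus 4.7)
The proof proposal is essentially a bookkeeping exercise: both parts follow by plugging the information from Theorem~\ref{thm:fubsp} into the appropriate clauses of Theorem~\ref{thm:main}, so the main work has already been done.

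The plan for part i) is as follows. By Theorem~\ref{thm:fubsp} the triangular scheme $(a_n)_n$ is strongly $\alpha$-almost uncorrelated, and in particular $\alpha$-almost uncorrelated. Since $\alpha \geq 1/2$ and $b_n \to \infty$ by hypothesis, claim~1 of Theorem~\ref{thm:main} applies directly and yields the semicircle law in probability for $(X_n)_n$.

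For part ii) I would appeal to claim~3 of Theorem~\ref{thm:main}. I must check the four hypotheses of that clause: (a) $(a_n)_n$ is strongly $\alpha$-almost uncorrelated with $\alpha > 1/2$, (b) $(1/b_n^2)_n$ is summable, (c) $(C^{(l)}_n)_n$ is summable for every $l \in \N$, and (d) $(b_n^{-1} D^{(l)}_n)_n$ is summable for every $l \in \N$. Conditions (a) and (b) hold by hypothesis together with Theorem~\ref{thm:fubsp}. For (c) and (d) I use Theorem~\ref{thm:fubsp}, which provides explicit constants $C^{(l)}_n = 4^{\alpha}\,\#\PePe{2l}/n^{4\alpha}$ and $D^{(l)}_n = 4^{\alpha}(\#\PePe{2l+2}+\#\PePe{4})/n^{4\alpha}$. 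Since $\alpha > 1/2$ gives $4\alpha > 2 > 1$, both sequences are summable in $n$, and because $b_n \geq 1$ the sequence $b_n^{-1}D^{(l)}_n$ is dominated by $D^{(l)}_n$ and is therefore summable as well. Hence claim~3 of Theorem~\ref{thm:main} applies and yields the almost sure semicircle law.

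The only point that requires a brief remark is the special case $b_n = n$ appearing in the statement: then $1/b_n^2 = 1/n^2$ is summable, so the assumption of part~ii) is automatically satisfied and the almost sure semicircle law holds for the full matrices. There is no genuine obstacle in the argument — Theorem~\ref{thm:fubsp} has already converted the covariance hypothesis $(\Sigma_n)_n\in\CovMat(\alpha)$ into precisely the quantitative bounds on $C^{(l)}_n$ and $D^{(l)}_n$ that Theorem~\ref{thm:main} requires, so the proof reduces to observing that $4\alpha > 2$ under the assumption $\alpha > 1/2$ and that $b_n \geq 1$.
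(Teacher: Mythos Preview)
Your proof is correct and follows essentially the same route as the paper: invoke Theorem~\ref{thm:fubsp} to obtain strong $\alpha$-almost uncorrelatedness with summable constants $C^{(l)}_n$ and $D^{(l)}_n$, then apply clauses~1 and~3 of Theorem~\ref{thm:main}. The only difference is that you spell out the explicit formulas for $C^{(l)}_n$, $D^{(l)}_n$ and the domination $b_n^{-1}D^{(l)}_n \leq D^{(l)}_n$, whereas the paper simply cites the summability statement from Theorem~\ref{thm:fubsp} (valid already for $\alpha>1/4$).
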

\begin{proof}
By Theorem~\ref{thm:fubsp}, $(a_n)_{n\in\N}$ is strongly $\alpha$-almost uncorrelated. For $\alpha>1/4$ the sequences $C^{(l)}_n$ and $D^{(l)}_n$ can be chosen to be summable over $n$ for all $l\in\N$. Then both claims follow directly from statements 1.\ and 3.\ of Theorem~\ref{thm:main}.
\end{proof}

\section{Proof of Theorem~\ref{thm:main}}\label{sec:mainproof}

Our main result of this paper, Theorem~\ref{thm:main} follows directly from Theorems~\ref{thm:expectationandvariance},~\ref{thm:expectationtocatalan}, and~\ref{thm:variancetozero}  below. The proof is based on the method of moments, a principle which was already used by E.\ Wigner \autocite{Wigner} in his proof of the semicircle law. Since then refined versions of Wigner`s argument have continually led to new results in random matrix theory, even at the sophisticated level of local laws \autocite{Soshnikov1999}. See e.g.\ \autocite{SixtyYears, Sodin2014} for recent overviews about the applications of the moment method to random matrix theory.

The method of moments is based on the fact that if $\mu$ is uniquely determined by its moments and $(\mu_n)_n$ is a sequence of probability measures with $\integrala{\mu_n}{x^k}\to \integrala{\mu}{x^k}$ for all $k\in\N$,
then $\mu_n\to\mu$ weakly. This notion carries over one-to-one to all probabilistic weak convergence types as in Definition~\ref{def:randomweakconvergence} (see e.g.\ \autocite{FleermannDiss} for a proof): If $(\sigma_n)_n$ is a sequence of ESDs, then if for all $k\in\N$, $\integrala{\sigma_n}{x^k}\to_n\integrala{\sigma}{x^k}$ in expectation resp.\ in probability resp.\ almost surely, then also $\sigma_n\to\sigma$ weakly in expectation resp.\ in probability resp.\ almost surely. With these insights, the following theorem -- which we will use for our proof -- is immediate:


\begin{theorem}\label{thm:expectationandvariance}
Let $(\sigma_n)_n$ be the empirical spectral distributions of random matrices $(X_n)_n$, whose entries have moments of all orders. Denote by $\sigma$ the semicircle distribution. Then 
\begin{enumerate}[i)]
	\item $\sigma_n$ converges to $\sigma$ weakly in expectation, if
	\begin{equation}\label{eq:toshowfirst}
	\forall\,k\in\N: \E\integrala{\sigma_n}{x^k} \xrightarrow[n\to\infty]{} \integrala{\sigma}{x^k},
	\end{equation}
	\item $\sigma_n$ converges to $\sigma$ weakly in probability, if i) holds and
	\begin{equation}\label{eq:toshowsecond}
	\forall\,k\in\N: \V\integrala{\sigma_n}{x^k} \xrightarrow[n\to\infty]{} 0,
	\end{equation}
	\item $\sigma_n$ converges to $\sigma$ weakly almost surely, if i) holds and
	\[
	\forall\,k\in\N: \V\integrala{\sigma_n}{x^k} \xrightarrow[n\to\infty]{} 0 \quad \text{summably fast.}
	\]
\end{enumerate}
\end{theorem}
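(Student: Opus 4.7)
The plan is to leverage the moment-method observation recorded by the authors in the paragraph preceding the theorem: because the semicircle distribution is compactly supported and hence determined by its moments, convergence of $\integrala{\sigma_n}{x^k}$ to $\integrala{\sigma}{x^k}$ in expectation (respectively in probability, respectively almost surely) for every $k\in\N$ already entails weak convergence of $\sigma_n$ to $\sigma$ in the corresponding probabilistic mode. Each of the three claims therefore reduces to producing the appropriate mode of convergence of the $k$-th moment.

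Claim i) requires nothing beyond this reduction, since its hypothesis is exactly moment convergence in expectation. For claim ii), I would decompose via the triangle inequality
\[
|\integrala{\sigma_n}{x^k} - \integrala{\sigma}{x^k}| \leq |\integrala{\sigma_n}{x^k} - \E\integrala{\sigma_n}{x^k}| + |\E\integrala{\sigma_n}{x^k} - \integrala{\sigma}{x^k}|.
\]
The second, deterministic term tends to zero by hypothesis i), so for $n$ large it lies below any prescribed $\epsilon/2$; Chebyshev's inequality applied to the variance hypothesis controls the first term by $4\V\integrala{\sigma_n}{x^k}/\epsilon^2$, which vanishes as $n\to\infty$. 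This yields moment convergence in probability, and the moment-method reduction finishes the argument.

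Claim iii) is handled with the same decomposition, but Chebyshev combined with the summability hypothesis now gives
\[
\sum_n \Prob\bigl(|\integrala{\sigma_n}{x^k} - \E\integrala{\sigma_n}{x^k}| > \epsilon\bigr) < \infty
\]
for every $\epsilon>0$, so the Borel--Cantelli lemma produces almost sure convergence of the centered $k$-th moment to zero. Combining this with the deterministic convergence of $\E\integrala{\sigma_n}{x^k}$ gives $\integrala{\sigma_n}{x^k}\to\integrala{\sigma}{x^k}$ almost surely for each fixed $k$. The only subtlety worth flagging is that weak convergence requires a \emph{single} event of full measure on which \emph{all} moments converge simultaneously; this is obtained by intersecting the countable family of probability-one sets indexed by $k\in\N$. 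Apart from this countable-intersection step, the argument is entirely standard and presents no genuine obstacle once the moment-method reduction has been invoked.
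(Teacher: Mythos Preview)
Your proposal is correct and matches the paper's approach: the authors declare the theorem ``immediate'' from the moment-method reduction stated just before it, and your Chebyshev/Borel--Cantelli argument for upgrading moment convergence in expectation to convergence in probability or almost surely is exactly the standard justification (and coincides with the details the authors relegate to \autocite{FleermannDiss}). There is nothing to add.
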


Since all random variables in $\alpha$-almost uncorrelated triangular schemes have moments of all orders, Theorem~\ref{thm:expectationandvariance} is applicable to the ensembles in this paper.
The usefulness of Theorem~\ref{thm:expectationandvariance}
is based on two observations.
First,
the moments of the semicircle distribution can be expressed in terms of the Catalan numbers $(\Cat_n)_{n\in\N_0}$ that are given by $\Cat_n = \frac{1}{n+1} \binom{2n}{n}$. Indeed,
\begin{equation}\label{eq:momentsofsemicircle}
\forall\,k\in\N: \integrala{\sigma}{x^k}=
\begin{cases}
\Cat_{\frac{k}{2}} 	& \text{ for even $k$,}\\
0 									&	\text{ for odd $k$.}
\end{cases}
\end{equation}
Second, for the ensembles $(X_n)_n$ considered in Theorem~\ref{thm:main}, the moments of the empirical spectral distributions $\sigma_n$ satisfy

\begin{equation}\label{eq:momenttosum}
\integrala{\sigma_n}{x^k} = \frac{1}{n}\tr (X^k_n)=\frac{1}{nb_n^{k/2}}\sum_{t_1,\ldots,t_k=1}^{n}{a^b_n(t_1,t_2)a^b_n(t_2,t_3)\cdots a^b_n(t_k,t_1)}.
\end{equation}
The remainder of this section is subdivided in three parts.
First we develop the combinatorics to deal with the sum in \eqref{eq:momenttosum}. Then we show
\eqref{eq:toshowfirst}. Lastly, we prove \eqref{eq:toshowsecond} and investigate in which cases the convergence is summably fast.

\subsection{Combinatorics}
In this subsection we present a refined version of the combinatorics introduced in \autocite{HKW} in order to deal with approximately uncorrelated entries. It is common practice to analyze the right hand side of \eqref{eq:momenttosum} using the language of graph theory. We begin by summarizing all the graph theoretical notions we need. As in the first author's previous work \autocite{FleermannMT} we follow the expositions \autocite{Stanley, Tittmann}.

\begin{definition}
Let $M$ be a finite set, $k\in\N_0$ be arbitrary, then we denote by $\binom{M}{k}$ the set of all $k$-element subsets of  $M$. A \emph{graph} $G$ is a triple $G=(V,E,\phi)$, where the following holds:
\begin{enumerate}[i)]
	\item $V$ is a finite set, whose elements are called \emph{vertices}, or \emph{nodes}.
	\item $E$ is a finite set, whose elements are called \emph{edges}.
	\item $\phi:E\longrightarrow \binom{V}{1}\cup \binom{V}{2}$ is a function, which is called \emph{incidence function}.
\end{enumerate}
Given arbitrary elements $e\in E$ and $u,v\in V$, such that $\phi(e)=\{u,v\}$, then it is the underlying view that the edge $e$ connects the vertices $u$ and $v$. In this situation, if $u=v$, then $e$ is called \emph{loop}. If $u\neq v$, then $e$ is called \emph{proper edge}. Two different edges $e\neq f\in E$ are called \emph{parallel} if they connect the same nodes, so if $\phi(e) = \phi(f)$. If there are edges $e_1,\ldots,e_k\in E$ which are all parallel to one another, but not parallel to any other edge in $E$, then we call each of the $e_i$ a \emph{$k$-fold edge}. For $k=2$ we use the term \emph{double edge}. If an edge $e$ does not have a parallel edge, $e$ is called a \emph{single edge}. An edge is called $\emph{even}$, if it is a $k$-fold edge with $k$ even, and \emph{odd}, if it is a $k$-fold edge with $k$ odd.
A \emph{path} is a finite sequence of the form
\[
v_1, e_1, v_2, e_2, v_3, e_3, \ldots, v_{k}, e_k, v_{k+1}
\]
for some $k\in\N$, vertices $v_1,\ldots,v_{k+1}\in V$ and edges $e_1,\ldots,e_k\in E$, so that each two neighboring vertices are connected by the edge in between, so $\phi(e_i)=\{v_i,v_{i+1}\}$ for all $i=1,\ldots,k$. If we also have $v_1=v_{k+1}$, then we call the path a \emph{cycle}.
\end{definition}

In order to connect the sum in \eqref{eq:momenttosum} to graphs we introduce  for $\ubar{t}=(t_1,\ldots,t_k)\in\oneto{n}^k$\label{sym:onetoentuple} the notation  $a^{b}_n(\ubar{t})\defeq a^{b}_n(t_1,t_2)  a^{b}_n(t_2,t_3) \cdots a^{b}_n(t_k,t_1)$. To account for the band structure
we call a tuple \emph{$\ubar{t}\in\oneto{n}^k$ $b_n$-relevant}, if each pair $(t_i,t_{i+1})$  (with  $i=1,\ldots,k$ and $k+1\equiv 1$) is $b_n$-relevant (see discussion right before Definition~\ref{def:randommatrix}). Setting
\[
\oneto{n}^k_{b}\defeq\left\{\ubar{t}\in\oneto{n}^k: \ubar{t} \text{ is $b_n$-relevant}\right\}
\]
we obtain
\begin{equation}\label{eq:momenttosumthree}
\integrala{\sigma_n}{x^k}= \frac{1}{nb_n^{k/2}} \sum_{\ubar{t}\in\oneto{n}^k_{b}} a^{b}_n(\ubar{t}).
\end{equation}




The connection to graphs is as follows. Each tuple $\ubar{t}\in\oneto{n}^k$ is identified with the graph $G_{\ubar{t}}=(V_{\ubar{t}},E_{\ubar{t}},\phi_{\ubar{t}})$ with vertices $V_{\ubar{t}} = \{t_1,\ldots,t_k\}$ and (abstract) edges $E_{\ubar{t}}=\{e_1,\ldots, e_k\}$, where $\phi_{\ubar{t}}(e_i)=\{t_i,t_{i+1}\}$  ($i=1,\ldots,k$ and $k+1\equiv 1$), as well as with its cycle
\begin{equation}
\label{eq:eulerian}
t_1,e_1,t_2,e_2,\ldots t_{k-1},e_{k-1},t_k,e_{k},t_1\,.
\end{equation}

Our first combinatorial task is to bound the number of vertices using information on the edges. To this end we introduce for $\ubar{t}\in\oneto{n}^k$ its \emph{profile} $\kappa(\ubar{t}) \in \N_{0}^k$ that records for $1 \leq l \leq k$ how many different $l$-fold edges $\ubar{t}$ contains, and the number $\ell(\ubar{t})$ of different loops in $\ubar{t}$:

\begin{eqnarray*}
\kappa_l(\ubar{t}) &\defeq&
\#\{\phi_{\ubar{t}}(e)\,|\,e\in E_{\ubar{t}} \text{ is an $l$-fold edge}\}, \quad l\in\{1,\ldots,k\},\\
\ell(\ubar{t}) &\defeq& \#\{\phi_{\ubar{t}}(e)\, |\, e \text{ is a loop in } E_{\ubar{t}} \}.
\end{eqnarray*}
To illustrate these definitions consider the tuple $\ubar{t}=(1,1,2,3,2,6,7,6,2,6,2)$. It has one single edge $\{1\}$, three different double edges, $\{1,2\}$, $\{2,3\}$ and $\{6,7\}$, and one $4$-fold edge, $\{2,6\}$. Therefore, $\kappa(\ubar{t}) = (1,3,0,1,0,0,0,0,0,0,0)$ and $\ell(\ubar{t})=1$.

Note that counting the number of edges yields
$k = \sum_{l=1}^{k} l\cdot\kappa_{l}(\ubar{t})$ for each  $\ubar{t}\in\oneto{n}^k$.


\begin{lemma}\label{lem:bound}
Let $n, k \in\N$ and $\ubar{t}\in\oneto{n}^k$ be arbitrary, then
\begin{enumerate}[i)]
	\item $\# V_{\ubar{t}}\leq 1 + \kappa_1(\ubar{t}) + \ldots + \kappa_k(\ubar{t}) - \ell(\ubar{t})$.
	\item If $\ubar{t}$ contains at least one odd edge, then
	\,$\#V_{\ubar{t}}\leq  \kappa_1(\ubar{t}) + \ldots + \kappa_k(\ubar{t}).$
\end{enumerate}
\end{lemma}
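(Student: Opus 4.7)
My plan is to base both bounds on the observation that the closed walk displayed in \eqref{eq:eulerian} visits every vertex of $V_{\ubar{t}}$ and traverses each of the $k$ edges of $E_{\ubar{t}}$ exactly once. Consequently, $G_{\ubar{t}}$ is connected, and the walk is an Eulerian circuit, forcing every vertex of $G_{\ubar{t}}$ to have even degree.

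For (i), the number of distinct $\phi_{\ubar{t}}$-images equals $\sum_{l=1}^k \kappa_l(\ubar{t})$, of which $\ell(\ubar{t})$ are loops and the remaining $\sum_l \kappa_l(\ubar{t}) - \ell(\ubar{t})$ are proper. Since a connected graph on $\#V_{\ubar{t}}$ vertices admits a spanning tree with $\#V_{\ubar{t}} - 1$ proper edges, the bound in (i) follows at once.

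For (ii) I would first dispose of the easy sub-case $\ell(\ubar{t}) \geq 1$, in which case (ii) is an immediate consequence of (i). It then remains to treat the sub-case where $\ubar{t}$ contains no loop but at least one odd proper edge. Here the plan is to introduce the simple graph $G'$ on vertex set $V_{\ubar{t}}$ whose edges are precisely those $\phi_{\ubar{t}}$-images of odd multiplicity. Because there are no loops in $G_{\ubar{t}}$, the parity of the degree of any vertex $u$ in $G'$ agrees with the parity of its degree in $G_{\ubar{t}}$, and the latter is even by the Eulerian property. Thus every vertex of $G'$ has even degree, and by hypothesis $G'$ has at least one edge; a standard fact of graph theory then guarantees that $G'$ contains a cycle. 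This cycle lies inside the underlying simple graph of $G_{\ubar{t}}$, which has $\#V_{\ubar{t}}$ vertices and exactly $\sum_l \kappa_l(\ubar{t})$ edges. A connected simple graph that is not a tree has at least as many edges as vertices, yielding (ii).

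The main obstacle is precisely this last sub-case of (ii): when $\ell(\ubar{t}) = 0$, the spanning-tree bound behind (i) only produces $\#V_{\ubar{t}} \leq 1 + \sum_l \kappa_l(\ubar{t})$, and sharpening this by one unit requires exhibiting an actual cycle in the underlying simple graph. The Eulerian/parity argument sketched above is what supplies that cycle, and it is also the unique place where the odd-edge hypothesis in (ii) enters.
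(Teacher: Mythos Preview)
Your argument is correct. For part~(i) the spanning-tree count and the paper's ``walk and discover'' count are really the same idea in different clothing: both amount to the statement that in a connected multigraph the number of distinct proper edge-classes is at least $\#V_{\ubar t}-1$.

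For part~(ii) the two proofs diverge. The paper stays with the walk: it selects a starting vertex so that a specific odd $m$-fold edge class contributes no newly discovered vertex. For $m=1$ one starts just after the single edge so that it is traversed last; for odd $m\geq 3$ one uses that among $m$ parallel edges two consecutive ones (in cyclic order along the walk) must be traversed in the same direction, and starting between them forces the first traversal of that edge class to land at an already visited vertex. Your route is more structural: you pass to the simple graph $G'$ of odd-multiplicity edges, use the Eulerian property of $G_{\ubar t}$ to see that every vertex of $G'$ has even degree, deduce a cycle in the underlying simple graph of $G_{\ubar t}$, and conclude via ``connected $+$ cycle $\Rightarrow$ edges $\geq$ vertices''. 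Both arguments pinpoint the role of the odd-edge hypothesis---in the paper it is what breaks the alternating-direction pattern, in yours it is what makes $G'$ nonempty. Your approach is a bit more conceptual and avoids the case distinction on $m$; the paper's approach has the advantage of being self-contained within the walk framework already set up and used throughout the subsequent lemmas.
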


\begin{remark}
We also use a weaker version of statement i) of the above lemma by dropping the term $\ell(\ubar{t})$ on the right hand side of the inequality.
\end{remark}

\begin{proof}[Proof of Lemma~\ref{lem:bound}] Let $\ubar{t}\in\oneto{n}^k$ be arbitrary. For the proofs we travel the cycle
\begin{equation}
\label{eq:walk}
t_1,e_1,t_2,e_2,\ldots,t_k,e_k,t_1
\end{equation}
by picking an initial node $t_i$ and then traversing the edges in (increasing) cyclic order until reaching $t_i$ again. Along the path we count the different nodes that we discover. Observe that loops never discover new vertices and that proper $k$-fold edges discover them only at their first passage. \newline
\underline{i)}
Writing $\ell(\ubar{t}) = \ell_1(\ubar{t}) + \ldots + \ell_k(\ubar{t})$, where $\ell_m(\ubar{t})$ denotes the number of different $m$-fold loops in $\ubar{t}$, and starting our tour at the initial vertex $t_1$ we obtain in this way the estimate $\# V_{\ubar{t}}\leq 1 +  \sum_{s=1}^{k} \kappa_s(\ubar{t}) - \ell_s(\ubar{t})$
that yields the desired inequality.\newline
\underline{ii)}
By statement i) it is enough to consider the case $\ell(\ubar{t}) =0$. Reasoning as above it suffices to identify one edge that does not encounter a new node upon first passage. By assumption there exists an odd integer $m$ with $\kappa_m(\ubar{t}) \geq 1$ so that we have $m$ parallel edges $e_{i_1},\ldots,e_{i_m}$. If $m=1$ we start our tour at $t_{i_1 + 1}$ (with the cyclic interpretation if $t_{i_1}=k$). Then the edge $e_{i_1}$ is last on the cycle and cannot discover a new vertex.

For any odd $m \geq 3$ there must be an index $l\in\{1,\ldots,m\}$ such that $e_{i_l}$ and $e_{i_{l+1}}$ are traversed in the same direction, where $i_{m+1}$ cyclicly becomes $i_1$. Then, start the tour at the vertex $t_{i_l + 1}$ (with the cyclic interpretation if $t_{i_l}=k$). Again we have that none of the edges  $e_{i_1},\ldots,e_{i_m}$ of our $m$-fold edge discovers a new node.
\end{proof}

Next we provide an upper bound on the number of $b_n$-relevant tuples $\ubar{t}\in\oneto{n}^k_{b}$ depending on an upper bound on the number of different vertices in $\ubar{t}$.

\begin{lemma}\label{lem:maxnodestuples}
Let $b=(b_n)_n$ be a sequence of $n$-bandwidths. If $k,n \in\N$ are arbitrary and $l\in\{1,\ldots,k\}$, then
\[
\# \{\ubar{t}\in\oneto{n}^k_{b}\, |\, \#V_{\ubar{t}}\leq l\} \leq k^{k}\cdot n b_n^{l-1}
\]
\end{lemma}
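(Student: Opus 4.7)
The plan is to set up a two-stage counting argument: first classify tuples by their repetition \emph{pattern}, then count how many actual tuples realize each pattern while respecting the $b_n$-relevance constraint. The factor $k^k$ will absorb the number of patterns, and the factor $nb_n^{l-1}$ will come from choosing the new vertex values.

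Concretely, to each tuple $\ubar{t}=(t_1,\ldots,t_k)\in\oneto{n}^k$ I associate a \emph{labelling} $c=(c_1,\ldots,c_k)\in\{1,\ldots,k\}^k$ as follows: set $c_1=1$, and for $i\geq 2$ let $c_i=c_j$ if $t_i=t_j$ for some $j<i$ with smallest such $j$, otherwise let $c_i=1+\max(c_1,\ldots,c_{i-1})$. So $c_i$ records which ``equivalence class of coinciding coordinates'' position $i$ belongs to, using the first-occurrence ordering. Two observations: (a) the number of possible labellings is at most $k^k$; (b) if $\#V_{\ubar{t}} \leq l$, then $c$ uses at most $l$ distinct values, i.e., the set of ``new-vertex positions'' $I(c)\defeq\{i:c_i>\max_{j<i}c_j\}$ (with $1\in I(c)$) has cardinality $m\leq l$.

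Now fix a labelling $c$ with $m\leq l$ new-vertex positions $1=i_1<i_2<\ldots<i_m$ and count $b_n$-relevant tuples $\ubar{t}$ producing that labelling. Traversing positions $1,2,\ldots,k$, at each step the value $t_i$ is either (i) forced to equal some earlier $t_j$ by the labelling, or (ii) a new value, i.e.\ $i\in I(c)$. In case (i) there is at most one choice. In case (ii) either $i=i_1=1$, contributing at most $n$ choices, or $i=i_j$ with $j\geq 2$, in which case $b_n$-relevance of the pair $(t_{i-1},t_i)$ forces $t_i$ into the set of at most $b_n$ indices that are $b_n$-relevant to $t_{i-1}$. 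Multiplying, the number of realizations of a single labelling is at most $n\cdot b_n^{m-1}\leq n\cdot b_n^{l-1}$ (using $b_n\geq 1$ and $m\leq l$).

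Summing over all at most $k^k$ labellings yields the stated bound $k^k\cdot n\,b_n^{l-1}$. The only point requiring a bit of care is to make sure that case (i) genuinely contributes factor $1$ (not a further factor of $b_n$), which is automatic since the value is literally copied from a previously chosen coordinate; any failure of $b_n$-relevance at such a step simply means the labelling has no realizations and only decreases the count. I do not anticipate any real obstacle here; the argument is a clean book-keeping of ``first appearances'' along the closed walk \eqref{eq:eulerian}, and the very generous prefactor $k^k$ makes it unnecessary to count labellings sharply.
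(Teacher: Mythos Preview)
Your proposal is correct and follows essentially the same approach as the paper: the paper also classifies tuples by their ``color structure'' (your labelling), bounds the number of such structures by $l^k\leq k^k$, and then bounds the realizations of a fixed structure by $n b_n^{l-1}$ via the same first-appearance argument along the walk. Your write-up is in fact slightly more explicit than the paper's, but the idea is identical.
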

\begin{proof}
By the color structure of a $k$-tuple $\ubar{t}$ we understand the information which entries of $\ubar{t}$ agree with each other. For $\ubar{t}\in\{\ubar{t}'\in\oneto{n}^k_{b}\,|\, \#V_{\ubar{t}'}\leq l\}$ each color structure may be encoded by a map $f:\{1,\ldots,k\}\longrightarrow \{1,\ldots,l\}$ so that there are $l^{k}\leq k^k$ possible color structures. Clearly, the number of different tuples $\ubar{t}$ with a given color structure is bounded by $n b_n^{l-1}$ because we have $n$ choices for the first color that appears at $t_1$ and at most $b_n-1$ choices for any color that appears later. This proves the lemma.

\end{proof}

In our analysis of \eqref{eq:momenttosumthree} we partition the summing indices $\ubar{t}$ into equivalence classes. We call two tuples $\ubar{s},\ubar{t}\in\oneto{n}^k_{b}$ \emph{equivalent}, if their profiles agree. The equivalence class generated by a tuple $\ubar{s} \in \oneto{n}^k_{b}$ is therefore given by
\[
\mc{T}(\ubar{s}) \defeq \{\ubar{t}\in\oneto{n}^k_{b}: \kappa(\ubar{t})=\kappa(\ubar{s})\}.
\]
Bounds on the number of equivalence classes and on their respective cardinalities are:
\begin{lemma}\label{lem:howmany}
Let $n,k\in\N$ be fixed.
\begin{enumerate}[i)]
	\item There are at most $(k+1)^k$ equivalence classes in $\oneto{n}^k_{b}$.
	\item Let $\ubar{s}\in\oneto{n}^k_{b}$ be arbitrary, then
	\begin{enumerate}[a)]
		\item $\#\mc{T}(\ubar{s})\leq k^{k} \cdot nb_n^{\kappa_1(\ubar{s}) + \ldots + \kappa_k(\ubar{s})}$.
		\item If $\ubar{s}$ contains at least one odd edge, we have
		$\,\#\mc{T}(\ubar{s}) \leq k^k \cdot nb_n^{ \kappa_1(\ubar{s}) + \ldots + \kappa_k(\ubar{s})-1}$.
\end{enumerate}
\end{enumerate}
\end{lemma}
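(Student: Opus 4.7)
The plan is to reduce everything to the bounds already at our disposal: the profile bound is a purely combinatorial counting argument, and the class-size bounds are immediate consequences of combining Lemma~\ref{lem:bound} with Lemma~\ref{lem:maxnodestuples}.

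For statement~i), I would first observe that the equivalence class of a tuple $\ubar{s}\in\oneto{n}^k_{b}$ is, by definition, completely determined by its profile $\kappa(\ubar{s})\in\N_0^k$. Hence it suffices to bound the number of admissible profiles. Recall from the discussion after the definition of $\kappa$ that every profile satisfies $\sum_{l=1}^{k} l\cdot\kappa_l(\ubar{s})=k$, which in particular forces $\kappa_l(\ubar{s})\in\{0,1,\ldots,k\}$ for each $l\in\{1,\ldots,k\}$. Since each of the $k$ coordinates of a profile takes at most $k+1$ values, the total number of possible profiles, and hence of equivalence classes, is at most $(k+1)^k$.

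For statement~ii.a), note that any two equivalent tuples share the same profile, so the weaker version of Lemma~\ref{lem:bound}~i) mentioned in the remark gives the uniform bound
\[
\forall\,\ubar{t}\in\mc{T}(\ubar{s}):\quad \#V_{\ubar{t}}\;\leq\;1+\kappa_1(\ubar{s})+\ldots+\kappa_k(\ubar{s}).
\]
Thus $\mc{T}(\ubar{s})\subseteq\{\ubar{t}\in\oneto{n}^k_{b}:\#V_{\ubar{t}}\leq l\}$ with $l\defeq 1+\kappa_1(\ubar{s})+\ldots+\kappa_k(\ubar{s})$, and applying Lemma~\ref{lem:maxnodestuples} to this set yields the claimed bound $k^k\cdot nb_n^{\kappa_1(\ubar{s})+\ldots+\kappa_k(\ubar{s})}$.

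Statement~ii.b) is proved in the same fashion, with the crucial observation that the property "having at least one odd edge" is encoded in the profile and therefore survives passage to the equivalence class: if $\ubar{s}$ contains an $m$-fold edge for some odd $m$, so does every $\ubar{t}\in\mc{T}(\ubar{s})$. Hence the stronger bound of Lemma~\ref{lem:bound}~ii) applies uniformly on $\mc{T}(\ubar{s})$, and Lemma~\ref{lem:maxnodestuples} with $l=\kappa_1(\ubar{s})+\ldots+\kappa_k(\ubar{s})$ gives the desired estimate. I do not anticipate a real obstacle here; the only subtlety is to note explicitly that profiles are constant on equivalence classes, which is what transports the vertex bounds of Lemma~\ref{lem:bound} from a single tuple to the whole class.
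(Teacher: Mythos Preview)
Your proposal is correct and follows essentially the same route as the paper: profiles live in $\{0,\ldots,k\}^k$ for the bound in~i), and ii) is obtained by feeding the vertex bounds of Lemma~\ref{lem:bound} (the weak form of~i) and the odd-edge form~ii)) into Lemma~\ref{lem:maxnodestuples}, using that the profile---and hence the presence of an odd edge---is constant on each equivalence class. The paper's proof is terser but makes the same moves, including the remark that the loop term in Lemma~\ref{lem:bound}~i) cannot be exploited here since $\ell(\ubar{t})$ is not determined by the profile.
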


\begin{proof}
The first statement is a consequence of the fact that each profile is contained in $\{0,\ldots,k\}^k$. The second claim follows immediately from Lemma~\ref{lem:maxnodestuples} and Lemma~\ref{lem:bound}. Note that the improved estimates of Lemma~\ref{lem:bound}~i) cannot be used here because the number of loops are not determined by the profile.
\end{proof}

When dealing with the variance of moments we need to consider pairs of tuples. We now define the corresponding equivalence classes. For $\ubar{s},\ubar{s}'\in\oneto{n}^k_{b}$ we set
\[
\mc{T}(\ubar{s},\ubar{s}')\defeq
\{(\ubar{t},\ubar{t}')\mid \ubar{t},\ubar{t}'\in\oneto{n}^k_{b}, \kappa(\ubar{t}) = \kappa(\ubar{s}), \kappa(\ubar{t}') = \kappa(\ubar{s}')\}
\]
and partition this set into edge disjoint tuple pairs
\[
\mc{T}^d(\ubar{s},\ubar{s}')\defeq
\{(\ubar{t},\ubar{t}') \in \mc{T}(\ubar{s},\ubar{s}') \mid
 \phi_{\ubar{t}}(E_{\ubar{t}})\cap\phi_{\ubar{t}'}(E_{\ubar{t}'})=\emptyset\}
\]
and into tuples pairs with at least one common edge
\[
\mc{T}^c(\ubar{s},\ubar{s}')\defeq
\{(\ubar{t},\ubar{t}')\in \mc{T}(\ubar{s},\ubar{s}') \mid
\phi_{\ubar{t}}(E_{\ubar{t}})\cap\phi_{\ubar{t}'}(E_{\ubar{t}'})\neq\emptyset\}.
\]
We further partition the set $\mc{T}^c(\ubar{s},\ubar{s}')$ into the subsets of equivalent tuples that have exactly $l$ edges in common. So for each $l\in\{1,\ldots,k\}$ we define
\[
\mc{T}^c_l(\ubar{s},\ubar{s}')\defeq
\{(\ubar{t},\ubar{t}')\in \mc{T}^c(\ubar{s},\ubar{s}') \mid
\#[\phi_{\ubar{t}}(E_{\ubar{t}})\cap\phi_{\ubar{t}'}(E_{\ubar{t}'})]=l\}.
\]

We are now interested in bounds for $\#\mc{T}^d(\ubar{s},\ubar{s}')$, $\# \mc{T}^c(\ubar{s},\ubar{s}')$ and $\# \mc{T}^c_l(\ubar{s},\ubar{s}')$. The first quantity can be trivially bounded by
\begin{equation}\label{eq:disjointbound}
\#\mc{T}^d(\ubar{s},\ubar{s}') \leq \#\mc{T}(\ubar{s},\ubar{s}')=\,\#\mc{T}(\ubar{s})\cdot \#\mc{T}(\ubar{s}').
\end{equation}
To bound the latter two quantities we use the common edge to join the paths $\ubar{t}$ and $\ubar{t}'$ in a specific way. Here we use the notion of cyclic permutations of tupels that we define by example. The cyclic permutations of $(2,8,6,3)$ are given by $(3,2,8,6)$, $(6,3,2,8)$, and $(8,6,3,2)$.

\begin{lemma}\label{lem:graphsuperposition}
Let $\ubar{t}$ and $\ubar{t}'$ in $\oneto{n}^k$ have a common edge. Then there exists $\ubar{u}\in\oneto{n}^{2k}$ with
\begin{enumerate}[i)]
	\item  $(u_1,\ldots,u_k)$ is a cyclic permutation of $(t_1,\ldots,t_k)$ and $(u_{k+1},\ldots,u_{2k})$ is a cyclic permutation of $(t_1',\ldots,t_k')$.
	\item $((u_1,u_2),\ldots,(u_k,u_{k+1}))$ is a cyclic permutation of $((t_1,t_2),\ldots,(t_k,t_1))$ and \newline $((u_{k+1},u_{k+2}),\ldots,(u_{2k},u_1))$ is a cyclic permutation of $((t_1',t_2'),\ldots,(t_k',t_1'))$.
\end{enumerate}
In particular, the cycle $\ubar{u}$ spans the graph obtained through superposition of the graphs of $\ubar{t}$ and $\ubar{t}'$. It travels first through all the edges of $\ubar{t}$ and then through all the edges of $\ubar{t}'$.	
\end{lemma}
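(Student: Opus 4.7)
The plan is to glue the two closed walks at their common edge. Let $\{a,b\}$ denote a common edge; by hypothesis there exist indices $i, j \in \{1,\ldots,k\}$ with $\{t_i, t_{i+1}\} = \{t'_j, t'_{j+1}\} = \{a,b\}$, where indices are read cyclically modulo $k$. After possibly swapping the names $a$ and $b$, I may assume $(t_i, t_{i+1}) = (a,b)$.

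First I would cyclically rotate $\ubar{t}$ so that its common edge is the closing edge, ending at vertex $b$. Concretely, set $u_m := t_{i+m}$ for $m = 1, \ldots, k$ (cyclic indices in $\{1,\ldots,k\}$), so that $u_1 = t_{i+1} = b$, $u_k = t_i = a$, and the edge list $(u_1,u_2),\ldots,(u_{k-1},u_k),(u_k,u_1)$ is the cyclic permutation of $\ubar{t}$'s edge list beginning at position $i+1$. For the second half I would choose the cyclic starting position in $\ubar{t}'$ so that the corresponding walk \emph{begins} at vertex $b$. This is possible in both orientations of the common edge in $\ubar{t}'$: if $(t'_j, t'_{j+1}) = (a,b)$, set $u_{k+m} := t'_{j+m}$; if $(t'_j, t'_{j+1}) = (b,a)$, set $u_{k+m} := t'_{j+m-1}$. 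In either case $u_{k+1} = b = u_1$, which ensures that the closing edge $(u_{2k}, u_1)$ appearing in property (ii) is precisely the closing edge of the cyclically permuted $\ubar{t}'$-cycle.

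Verification of the two claims is then a direct unpacking. Property (i) holds because each half of $\ubar{u}$ is literally a cyclic shift of the corresponding $t$-tuple. Property (ii) holds because the first $k$ edges of $\ubar{u}$ traverse $\ubar{t}$'s cycle starting just after the common edge, and the last $k$ edges traverse $\ubar{t}'$'s cycle starting at the common vertex $b$, so each half is indeed a cyclic permutation of the respective edge list. The concluding "in particular" statement is then automatic: the first $k$ steps of the walk $u_1, u_2, \ldots, u_{2k}, u_1$ cover exactly the edges of $\ubar{t}$, and the remaining $k$ steps cover exactly the edges of $\ubar{t}'$.

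The sole obstacle is careful bookkeeping of cyclic indices together with the two orientation cases for the common edge in $\ubar{t}'$; there is no deeper conceptual difficulty, the lemma being essentially the observation that a shared edge between two closed walks produces a single closed walk of the combined length that traverses both in succession.
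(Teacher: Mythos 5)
Your proof is correct and follows essentially the same approach as the paper: rotate both closed walks so that they meet at a common vertex, then concatenate. The paper's own proof simply observes that a common edge implies a common node, rotates both tuples to start there, concatenates, and declares the verification of (i) and (ii) straightforward; your version carries out the same construction more explicitly, singling out the common vertex as an endpoint $b$ of the shared edge and tracking the two orientation cases for the common edge in $\ubar{t}'$, but this extra bookkeeping is not logically necessary — aligning at any common node suffices, as the paper does.
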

\begin{proof}
Let $\ubar{t}$ and $\ubar{t}'$ be as in the statement of the lemma. Then they have a common node. Therefore, there exist cyclic permutations $\ubar{\tilde{t}}$ of $\ubar{t}$ and $\ubar{\tilde{t}'}$ of $\ubar{t'}$ such that $\tilde{t}_1  = \tilde{t}'_1$. Set $\ubar{u}\defeq (\tilde{t}_1,\ldots,\tilde{t}_k,\tilde{t}'_1,\ldots,\tilde{t}'_k)$. It is straight forward to verify claims i) and ii).
\end{proof}

The following two lemmas provide our estimates on $\# \mc{T}^c(\ubar{s},\ubar{s}')$ and $\# \mc{T}^c_l(\ubar{s},\ubar{s}')$.
\begin{lemma}\label{lem:doublebound}
Let $b=(b_n)_n$ be a sequence of $n$-bandwidths and $n,k\in\N$ be fixed. Let $\ubar{s}, \ubar{s}'\in\oneto{n}^k_{b}$. Then the following statements hold:
\begin{enumerate}
\item If $\ubar{s}$ and $\ubar{s}'$ have only even edges, we have for each $(\ubar{t},\ubar{t}')\in \mc{T}^c(\ubar{s},\ubar{s}')$ that
\[
\#(V_{\ubar{t}}\cup V_{\ubar{t}'}) \leq k.
\]
\item Assume that one of the tuples $\ubar{s}$ or $\ubar{s}'$ contains at least one odd edge and let $l\in\{1,\ldots,k\}$. Then for each $(\ubar{t},\ubar{t}')\in \mc{T}^c_l(\ubar{s},\ubar{s}')$ we have
\[
\# (V_{\ubar{t}} \cup V_{\ubar{t}'}) \leq \sum_{j=1}^k \kappa_j(\ubar{s}) + \sum_{j=1}^k \kappa_j(\ubar{s}') - l.
\]
\end{enumerate}
\end{lemma}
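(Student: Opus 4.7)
The plan is to employ the superposition cycle $\ubar{u}$ from Lemma~\ref{lem:graphsuperposition} in order to reduce both statements to the vertex-counting bounds of Lemma~\ref{lem:bound}. Since any pair $(\ubar{t},\ubar{t}')\in\mc{T}^c(\ubar{s},\ubar{s}')$ shares at least one edge, Lemma~\ref{lem:graphsuperposition} produces a closed walk $\ubar{u}$ of length $2k$ whose vertex set equals $V_{\ubar{t}}\cup V_{\ubar{t}'}$ and whose set of distinct edges is $\phi_{\ubar{t}}(E_{\ubar{t}})\cup\phi_{\ubar{t}'}(E_{\ubar{t}'})$, with the multiplicity in $\ubar{u}$ of each such edge equal to the sum of its multiplicities in $\ubar{t}$ and in $\ubar{t}'$. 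Writing $m_j\defeq\kappa_j(\ubar{u})$, we have $\sum_{j=1}^{2k}j\,m_j=2k$, and by inclusion--exclusion on edge sets the total number of distinct edges in $\ubar{u}$ equals $\sum_j\kappa_j(\ubar{s})+\sum_j\kappa_j(\ubar{s}')-l$, using that $\kappa_j(\ubar{t})=\kappa_j(\ubar{s})$ and analogously for $\ubar{t}'$.

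For statement 1, since all edges of $\ubar{t}$ and $\ubar{t}'$ are even, every edge of $\ubar{u}$ has even multiplicity, and any shared edge has multiplicity $\geq 2$ in each of $\ubar{t}$ and $\ubar{t}'$, hence multiplicity $\geq 4$ in $\ubar{u}$. Rewriting $2k=\sum_{j\text{ even}}j\,m_j$ in the form $\sum_j m_j=k-(m_4+2m_6+3m_8+\cdots)$, the existence of at least one edge with multiplicity $\geq 4$ forces $\sum_j m_j\leq k-1$, and Lemma~\ref{lem:bound}~i) then yields $\#V_{\ubar{u}}\leq 1+\sum_j m_j\leq k$, as required.

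For statement 2, we assume without loss of generality that $\ubar{s}$ (hence $\ubar{t}$) contains an odd edge and distinguish two cases. If $\ubar{u}$ itself contains an odd edge, then Lemma~\ref{lem:bound}~ii) applied to $\ubar{u}$ directly gives $\#V_{\ubar{u}}\leq\sum_j m_j=\sum_j\kappa_j(\ubar{s})+\sum_j\kappa_j(\ubar{s}')-l$. Otherwise every odd edge of $\ubar{t}$ must be matched by an odd multiplicity in $\ubar{t}'$ (to keep the sum of multiplicities even) and is therefore shared, so we pick a shared odd edge $e^{*}$ of $\ubar{t}$. The step we expect to be the main obstacle is the coordination of the starting vertex of the superposition $\ubar{u}$ with the specific starting vertex demanded by the proof of Lemma~\ref{lem:bound}~ii): cyclically permuting $\ubar{t}$ so that a copy of $e^{*}$ is traversed last forces the starting vertex of $\ubar{t}$'s walk to be an endpoint of $e^{*}$, which lies in $V_{\ubar{t}'}$ because $e^{*}$ is shared; hence $\ubar{t}'$ may also be cyclically permuted to start at the same vertex, producing a valid superposition $\ubar{u}$. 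Traversing $\ubar{u}$ from this common vertex, the first half discovers at most $\sum_j\kappa_j(\ubar{s})$ vertices by Lemma~\ref{lem:bound}~ii), while the second half can discover new vertices only through first traversals of the non-shared distinct edges of $\ubar{t}'$ (shared edges already have both endpoints in $V_{\ubar{t}}$), contributing at most $\sum_j\kappa_j(\ubar{s}')-l$ further vertices. Summing gives the desired bound.
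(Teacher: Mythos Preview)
Your argument is correct. For part~2 it is essentially the paper's proof: construct the superposition cycle $\ubar{u}$, bound the vertices discovered in the first half by $\sum_j\kappa_j(\ubar{s})$ via Lemma~\ref{lem:bound}~ii) applied to $\ubar{t}$, and observe that in the second half the starting node is already known and each of the $l$ shared edges cannot reveal a new vertex on first passage. Your extra case split (according to whether $\ubar{u}$ has an odd edge) and your careful choice of starting vertex are unnecessary: the statement of Lemma~\ref{lem:bound}~ii) bounds $\#V_{\ubar{t}}$ independently of where the walk starts, so any superposition $\ubar{u}$ produced by Lemma~\ref{lem:graphsuperposition} works, and the single argument you give in your second case already covers the first.

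For part~1 you take a genuinely different route from the paper. The paper does not use the superposition here; instead it bounds $\#V_{\ubar{t}}$ and $\#V_{\ubar{t}'}$ separately by $k/2+1$ via Lemma~\ref{lem:bound}~i), and then argues that if one of them, say $\ubar{t}$, actually attains $k/2+1$ vertices it must consist solely of proper double edges, so the shared edge forces two vertices of $\ubar{t}'$ to lie in $V_{\ubar{t}}$, leaving at most $k/2-1$ new ones. Your approach --- pass to $\ubar{u}$, note every edge multiplicity is even with at least one multiplicity $\geq 4$, so $\sum_j \kappa_j(\ubar{u})\leq k-1$, then apply Lemma~\ref{lem:bound}~i) to $\ubar{u}$ --- is cleaner and avoids the case distinction on whether the maximum $k/2+1$ is attained.
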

\begin{proof}
1. Let $(\ubar{t},\ubar{t}')\in\mc{T}^c(\ubar{s},\ubar{s}')$. Then both $\ubar{t}$ and $\ubar{t}'$ each span at most $\frac{k}{2}+1$ nodes by Lemma~\ref{lem:bound}. The only case that needs some thought is if at least one of them, say $\ubar{t}$, contains exactly $\frac{k}{2}+1$ different vertices. Then $\ubar{t}$ consists of double edges only, all of them \emph{proper}. In particular, since having a proper edge with $\ubar{t}$ in common, $\ubar{t}'$ can span at most $\frac{k}{2}+1-2$ additional nodes, leading to a total of at most
$k$ different nodes.

2. We begin with the case that $\ubar{s}$ contains an odd edge. Since $\ubar{t}$ and $\ubar{t}'$ have at least one edge in common we may define $\ubar{u}\in\oneto{n}^{2k}$ as in Lemma~\ref{lem:graphsuperposition}. Obviously, $\#(V_{\ubar{t}}\cup V_{\ubar{t}'}) = \# V_{\ubar{u}}$. Let us travel along the cycle $\ubar{u}$ and observe how many different nodes we discover. By travelling the first $k$ edges of $\ubar{u}$ we actually travel the edges of $\ubar{t}$ by Lemma~\ref{lem:graphsuperposition}~ii) and may at most discover $\kappa_1(\ubar{t})+\ldots +\kappa_k(\ubar{t})$ nodes by Lemma~\ref{lem:bound}~ii). Travelling the remaining $k$ edges of $\ubar{u}$ we may subtract $1+l$ from the general upper bound $1+\kappa_1(\ubar{t}')+\ldots +\kappa_k(\ubar{t}')$ (cf.~proof of  Lemma~\ref{lem:bound}) because the first node is not new and because all first passages along edges that are common edges with $\ubar{t}$ (there are $l$ of them) do not discover a new node. This proves the second claim. In case $\ubar{s}$ does not contain an odd edge, $\ubar{s}'$ does. Interchanging the roles of $\ubar{t}$ and $\ubar{t}'$ allows to use the same argument for the proof.
\end{proof}

\begin{lemma}\label{lem:doublehowmany}
Let $b=(b_n)_n$ be a sequence of $n$-bandwidths. Fix $n,k\in\N$ and let $\ubar{s}$ and $\ubar{s}'$ in $\oneto{n}^k_{b}$ be arbitrary, then
	\begin{enumerate}[a)]
		\item If both $\ubar{s}$ and $\ubar{s}'$ contain only even edges, then $\#\mc{T}^c(\ubar{s},\ubar{s}')\leq k^2\cdot (2k)^{2k} \cdot nb_n^{k-1}$.
		\item If $\ubar{s}$ or $\ubar{s}'$ contains at least one odd edge, we have
		\[
		\#\mc{T}^c(\ubar{s},\ubar{s}') \leq k^2 \cdot (2k)^{2k} \cdot nb_n^{\kappa_1(\ubar{s}) + \ldots + \kappa_k(\ubar{s})+\kappa_1(\ubar{s}') + \ldots + \kappa_k(\ubar{s}')-2}.
		\]
		\item If $\ubar{s}$ or $\ubar{s}'$ contains at least one odd edge, we have for all $l\in\{1,\ldots,k\}$, that
		\[
		\#\mc{T}^c_l(\ubar{s},\ubar{s}') \leq k^2 \cdot (2k)^{2k} \cdot nb_n^{\kappa_1(\ubar{s}) + \ldots + \kappa_k(\ubar{s})+\kappa_1(\ubar{s}') + \ldots + \kappa_k(\ubar{s}')-l-1}.
		\]
\end{enumerate}
\end{lemma}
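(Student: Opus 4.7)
The plan is to reduce counting pairs $(\ubar{t},\ubar{t}')\in \mc{T}^c(\ubar{s},\ubar{s}')$ to counting single $2k$-tuples in $\oneto{n}^{2k}_b$ by means of the superposition construction of Lemma~\ref{lem:graphsuperposition}, and then to apply the tuple count of Lemma~\ref{lem:maxnodestuples} (with tuple length $2k$) together with the vertex bounds of Lemma~\ref{lem:doublebound}. All three statements will then fall out of a single master inequality.

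For each pair $(\ubar{t},\ubar{t}')\in \mc{T}^c(\ubar{s},\ubar{s}')$, the existence of a common edge allows us to invoke Lemma~\ref{lem:graphsuperposition} to produce a tuple $\ubar{u}\in \oneto{n}^{2k}$ whose consecutive-pair sequence (read cyclically) consists of a cyclic permutation of the edges of $\ubar{t}$ followed by a cyclic permutation of the edges of $\ubar{t}'$. Since every edge of $\ubar{u}$ already occurs in $\ubar{t}$ or $\ubar{t}'$, each consecutive pair of entries in $\ubar{u}$ is $b_n$-relevant, so $\ubar{u}\in \oneto{n}^{2k}_b$, and $\#V_{\ubar{u}}=\#(V_{\ubar{t}}\cup V_{\ubar{t}'})$. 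Choosing some such $\ubar{u}$ for every pair defines a map $\Phi:\mc{T}^c(\ubar{s},\ubar{s}')\to \oneto{n}^{2k}_b$. I claim $\Phi$ is at most $k^2$-to-one: from $\ubar{u}$ we read off its first half $(u_1,\ldots,u_k)$, a cyclic permutation of $\ubar{t}$, so $\ubar{t}$ lies in the set of at most $k$ cyclic permutations of $(u_1,\ldots,u_k)$, and the same argument produces at most $k$ candidates for $\ubar{t}'$ from the second half.

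Applying Lemma~\ref{lem:maxnodestuples} with $k$ replaced by $2k$ and with its parameter $l$ set to any upper bound $L$ on $\#V_{\ubar{u}}$ yields
\[
\#\{\ubar{u}\in \oneto{n}^{2k}_b : \#V_{\ubar{u}}\leq L\}\leq (2k)^{2k}\cdot nb_n^{L-1}.
\]
Combining this with the $k^2$-to-one estimate on $\Phi$ gives the master bound $\#\mc{T}^c(\ubar{s},\ubar{s}')\leq k^2\cdot (2k)^{2k}\cdot nb_n^{L-1}$, and identically with $\mc{T}^c$ replaced by $\mc{T}^c_l$ throughout. To close the three cases I would then plug in the appropriate $L$ from Lemma~\ref{lem:doublebound}: in case (a) part~1 of Lemma~\ref{lem:doublebound} supplies $L=k$; in case (c) part~2 supplies $L=\sum_{j}\kappa_j(\ubar{s})+\sum_{j}\kappa_j(\ubar{s}')-l$; and case (b) reduces to case (c) with $l=1$, which is legitimate because every pair in $\mc{T}^c(\ubar{s},\ubar{s}')$ shares at least one edge.

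I expect the main obstacle to be the careful verification of the two preliminary assertions about $\ubar{u}$, namely that it is indeed $b_n$-relevant and that the fibres of $\Phi$ have size at most $k^2$. Both ultimately rely on the cyclic structure guaranteed by Lemma~\ref{lem:graphsuperposition}; once they are in place the proof is a direct substitution and the three estimates share essentially the same argument.
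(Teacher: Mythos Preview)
Your proposal is correct and is essentially identical to the paper's proof: construct $\ubar{u}\in\oneto{n}^{2k}_b$ via Lemma~\ref{lem:graphsuperposition}, bound $\#V_{\ubar{u}}$ by Lemma~\ref{lem:doublebound}, count such $\ubar{u}$ via Lemma~\ref{lem:maxnodestuples}, and multiply by the $k^2$ bound on the fibres of $\Phi$. Your phrasing ``case~(b) reduces to case~(c) with $l=1$'' is a slight shorthand---what you actually use is that every pair in $\mc{T}^c(\ubar{s},\ubar{s}')$ lies in some $\mc{T}^c_l(\ubar{s},\ubar{s}')$ with $l\geq 1$, so the vertex bound $L=\sum_j\kappa_j(\ubar{s})+\sum_j\kappa_j(\ubar{s}')-1$ from Lemma~\ref{lem:doublebound}\,(2) holds uniformly over $\mc{T}^c(\ubar{s},\ubar{s}')$---but the argument is sound.
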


\begin{proof}
a) Pick $\ubar{s}$ and $\ubar{s}'$ in $\oneto{n}^k_{b}$ with only even edges. In order to bound the number of tuple pairs $(\ubar{t},\ubar{t}')\in\mc{T}^c(\ubar{s},\ubar{s}')$ we first estimate the number of cycles $\ubar{u}\in\oneto{n}^{2k}_{b}$ that can be constructed from $\ubar{t}$ and $\ubar{t}'$ via Lemma~\ref{lem:graphsuperposition}. Statement 1.~of Lemma~\ref{lem:doublebound} together with Lemma~\ref{lem:maxnodestuples} yield the upper bound $(2k)^{2k}\cdot n b_n^{k-1}$. Statement a) follows since each of these cycles $\ubar{u}\in\oneto{n}^{2k}_{b}$ can be generated by at most $k^2$ different  tuple pairs $(\ubar{t},\ubar{t}')\in\mc{T}^c(\ubar{s},\ubar{s}')$.

Claims b) and c) are proved in a similar way, employing statement 2.~of Lemma~\ref{lem:doublebound}.

\end{proof}


\subsection{Convergence of Expected Moments}

In this subsection we show relation~(\ref{eq:toshowfirst}) of Theorem~\ref{thm:expectationandvariance} and consequently the semicircle law in expectation for the ensembles considered in our main Theorem~\ref{thm:main}. The proof is a combination of arguments presented in \autocite{Bogachev} and \autocite{HKW}. We include the proof here to keep the paper self-contained and as an opportunity for the reader to familiarize herself or himself with the language and method of proof in a situation that is less involved than in the next subsection.
\begin{theorem}\label{thm:expectationtocatalan}
Let $(X_n)_n$ be a sequence of periodic random band matrices which is based on an $\alpha$-almost uncorrelated triangular array $(a_n)_n$ with $\alpha\geq\frac{1}{2}$ and bandwidth $b=(b_n)_n$. Then if $b_n\to\infty$, we have for all $k\in\N$, that
\[
\E\integrala{\sigma_n}{x^k} \xrightarrow[n\to\infty]{} \integrala{\sigma}{x^k}.
\]
\end{theorem}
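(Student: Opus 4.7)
The plan is to apply the method of moments to the representation \eqref{eq:momenttosumthree} of $\integrala{\sigma_n}{x^k}$. Taking expectations and grouping the summation over $\ubar{t} \in \oneto{n}^k_b$ by the profile equivalence classes $\mc{T}(\ubar{s})$ of Lemma~\ref{lem:howmany}, the expected $k$-th moment decomposes into a sum of at most $(k+1)^k$ terms, each of the form (cardinality of the class) $\times$ (common value of $\E a^b_n(\ubar{t})$ on the class). The aim is to show that only one family of profiles contributes in the limit, namely those in which all edges are non-loop double edges and the underlying multigraph is a tree on $k/2+1$ vertices.

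To show that the other profiles vanish I would combine the decay estimate (AAU1) with the vertex and class-size bounds of Lemmas~\ref{lem:bound}~and~\ref{lem:howmany}, using $\alpha \geq 1/2$ and $b_n \to \infty$ at the crucial points. If $\ubar{t}$ contains a single edge, i.e.\ $\kappa_1(\ubar{t}) \geq 1$, the factor $n^{-\alpha \kappa_1}$ from (AAU1) together with $\alpha \geq 1/2$ and the identity $\sum_l l\kappa_l = k$ produces a contribution of order $O(b_n^{-1})$. If $\kappa_1 = 0$ but some odd $l \geq 3$ satisfies $\kappa_l \geq 1$, then the same identity forces $\sum_l \kappa_l \leq (k-1)/2$, giving $O(b_n^{-3/2})$. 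If all edges are even but some has multiplicity at least $4$, then $\sum_l \kappa_l < k/2$, yielding $O(b_n^{-1})$ directly from Lemma~\ref{lem:howmany}. Tuples with only double edges whose graph has a loop or contains a cycle satisfy $\# V_{\ubar{t}} \leq k/2$ by Lemma~\ref{lem:bound}~i) (exploiting the loop correction $-\ell(\ubar{t})$) respectively by elementary graph theory, and Lemma~\ref{lem:maxnodestuples} again gives $O(b_n^{-1})$. For odd $k$ every tuple falls into one of these cases, so the limit is $0$, consistent with \eqref{eq:momentsofsemicircle}.

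The remaining family consists of tuples $\ubar{t}$ with $k$ even whose $k/2$ non-loop double edges form a tree on $k/2+1$ distinct vertices. On such tuples (AAU2) gives $\E a^b_n(\ubar{t}) = 1 + o(1)$ uniformly, so the task reduces to counting them. These tuples are in bijection with pairs (plane rooted tree with $k/2$ edges, admissible embedding of its vertex set into $\oneto{n}$ respecting the periodic bandwidth $b_n$). There are $\Cat_{k/2}$ plane rooted trees, $n$ choices for the root embedding, and for each freshly discovered vertex in a DFS traversal of the tree roughly $b_n$ admissible choices of neighbour modulo $n$. This yields a count of $\Cat_{k/2} \cdot n b_n^{k/2}(1+o(1))$, so after dividing by the prefactor $n b_n^{k/2}$ the main family contributes $\Cat_{k/2}$, which by \eqref{eq:momentsofsemicircle} equals the $k$-th moment of $\sigma$. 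The main obstacle is this last counting step: the $O(1)$ corrections per new vertex, arising from accidental coincidences of distinct tree-vertices under the embedding or from the need not to re-create an already placed edge, must be shown to contribute only lower-order terms. A careful DFS bookkeeping in the style of \autocite{Bogachev,HKW} accomplishes this, and the periodic structure is helpful in that it eliminates the boundary corrections that complicate strict band matrices.
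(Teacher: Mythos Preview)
Your proposal is correct and follows essentially the same route as the paper: decompose by profile classes, kill off all profiles except the pure double-edge one via (AAU1) combined with the class-size bounds of Lemmas~\ref{lem:bound}--\ref{lem:howmany}, then within the double-edge class separate the tree-like tuples on $k/2+1$ vertices (counted by $\Cat_{k/2}$ via Dyck paths / plane rooted trees) from the rest and conclude with (AAU2). One small inaccuracy to tighten: $\E a^b_n(\ubar{t})$ is \emph{not} constant on an equivalence class $\mc{T}(\ubar{s})$ --- only the bounds from (AAU1) and (AAU2) are uniform over the class --- so your phrase ``common value of $\E a^b_n(\ubar{t})$ on the class'' should read ``uniform bound'', which is indeed how you use it later.
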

\begin{proof}
Keeping relation~\eqref{eq:momenttosumthree} in mind the basic approach to the proof is to sum $\E a^{b}_n(\ubar{t})$ over equivalence classes $t \in \mc{T}(\ubar{s})$ and to perform the $n$-limit for these partial sums. This suffices as the number of equivalence classes is bounded by $(k+1)^k$ (see Lemma~\ref{lem:howmany}).

\underline{Step 1: Let $k\in\N$ be odd.}\newline
For $\ubar{s}\in\oneto{n}^k_{b}$  condition (AAU1) implies $\abs{\E a_n^{b}(\ubar{t})} \leq C_k
/n^{\alpha\cdot\kappa_1(\ubar{s})}\leq C_k
/ b_n^{\alpha\cdot \kappa_1(\ubar{s})}$ for all $\ubar{t} \in\mc{T}(\ubar{s})$. As $\ubar{s}$  must have at least one odd edge, Lemma~\ref{lem:howmany} yields
\begin{align*}
\bigabs{\frac{1}{nb_n^{k/2}}\sum_{\ubar{t}\in \mc{T}(\ubar{s})}\hspace{-0.2cm}{\E a_n^{b}(\ubar{t})}}
&\leq \frac{C_k
}{nb_n^{k/2}}\cdot k^k \cdot nb_n^{(1-\alpha)\cdot\kappa_1(\ubar{s})+\kappa_2(\ubar{s}) + \ldots +\kappa_{k}(\ubar{s}) -1}.
\end{align*}
For $\alpha\geq\frac{1}{2}$, the last exponent is maximal if $\ubar{s}$ consists of one single edge and double edges otherwise due to the constraint $k = \sum_{l=1}^{k} l\cdot\kappa_{l}(\ubar{t})$ (see line above Lemma~\ref{lem:bound}). Thus the exponent is at most $(1-\alpha) + \frac{k-1}{2} -1\leq\frac{k}{2}-1$ and we have
\begin{equation}\label{eq:oddestimateT}
\bigabs{\frac{1}{nb_n^{k/2}}\sum_{\ubar{t}\in \mc{T}(\ubar{s})}\hspace{-0.2cm}{\E a_n^{b}(\ubar{t})}}
\leq \frac{C_k}
{b_n^{k/2}} \cdot k^k \cdot b_n^{k/2-1} \xrightarrow[n\to\infty]{} 0.
\end{equation}
Summation over all relevant equivalence classes shows $\E\integrala{\sigma_n}{x^k} \xrightarrow[n\to\infty]{}0$ as required by relation~\eqref{eq:momentsofsemicircle}.
\underline{Step 2: Let $k\in\N$ be even.}\newline
We distinguish three different types of equivalence classes.

\underline{Case 1:} Let $\ubar{s}\in\oneto{n}^k_{b}$, so that $\ubar{s}$ contains an odd edge.
We can derive statement~\eqref{eq:oddestimateT} with the same line of reasoning as in Step 1, except for the maximization argument for the exponent. Now, as $k$ is even, the exponent is maximal if $\ubar{s}$ consists of double edges only. Nevertheless the exponent is still bounded above by $\frac{k}{2}-1$.


\underline{Case 2:}
Let $\ubar{s}\in\oneto{n}^k$ have only even edges, but at least one $m$-fold edge with $m\geq 4$.
In this case Lemma~\ref{lem:howmany} implies
\[
\# \mc{T}(\ubar{s}) \leq k^k \cdot nb_n^{\kappa_{2}(\ubar{s})+\kappa_{4}(\ubar{s})\ldots + \kappa_{k}(\ubar{s})}.
\]
The exponent is maximized when $\ubar{s}$ has one $4$-fold edge and just double edges otherwise, and then we obtain the exponent $1 + (k-4)/2 = k/2-1$.
Using $\abs{\E a_n^{b}(\ubar{t})} \leq  C_k
$ from $(AAU1)$ we again may conclude statement~\eqref{eq:oddestimateT}.

\underline{Case 3:}
Let $\ubar{s}\in\oneto{n}^k_{b}$ consist of double edges only and partition the set $\mc{T}(\ubar{s})$ into sets
\[
\mc{T}_{\frac{k}{2}+1}(\ubar{s})\defeq \left\{\ubar{t}\in\mc{T}(\ubar{s}): \# V_{\ubar{t}}=\frac{k}{2}+1\right\}
\qquad \text{and} \qquad
\mc{T}_{\leq \frac{k}{2}}(\ubar{s}) \defeq \left\{\ubar{t}\in\mc{T}(\ubar{s}): \# V_{\ubar{t}} \leq \frac{k}{2}\right\}.
\]
By Lemma~\ref{lem:maxnodestuples} we have
\[
\#\mc{T}_{\leq \frac{k}{2}}(\ubar{s}) \leq k^k n b_n^{\frac{k}{2}-1}.
\]
Again condition (AAU1) leads to the same estimate as in line~\eqref{eq:oddestimateT} for the partial sum over $\ubar{t}\in \mc{T}_{\leq\frac{k}{2}}(\ubar{s})$. Finally, we turn to those summands that contribute in the limit. A version of the following argument was already at the heart of Wigner's proof in \autocite{Wigner}.
To count the possibilities to construct a $\ubar{t}\in\mc{T}_{\frac{k}{2}+1}(\ubar{s})$, we first pick an appropriate coloring  $f:\{1,\ldots,k\}\longrightarrow \{1,\ldots,k/2+1\}$ in \emph{standard form}. This means $f(1)=1$ and for $l>1$ we have that if $f(l)\neq f(j)$ for all $j<l$, then $f(l)=\max\{f(j):\,j<l\}+1$. The possible standard colorings for $k$-tuples with only proper double edges and $k/2+1$ different vertices are in bijective correspondence to Dyck paths of length $k$, and there are exactly $\Cat_{\frac{k}{2}}$ of them. For example, the tuple $(8,5,6,9,6,2,6,5)$ has the standard coloring scheme  $(1,2,3,4,3,5,3,2)$, which is associated with the difference sequence of the Dyck path $(1,1,1,-1,1,-1,-1,-1)$. For a formal proof of this we refer the reader to \autocite[15]{Anderson}: There, we note that given a coloring $f$ as above, we obtain the associated Wigner word representative $(f(1),f(2),\ldots,f(k),f(1))$ (and vice versa) as in the proof of their Lemma 2.1.6.

Given such a standard coloring $f$, to construct a $b_n$-relevant tuple in $\oneto{n}^k$ matching this coloring we have at least $n \cdot (b_n-1) \cdots (b_n-\frac{k}{2})$ and at most $n\cdot b_n^{\frac{k}{2}}$ possibilities. Thus
\[
\frac{1}{n b_n^{k/2}}\#\mc{T}_{\frac{k}{2}+1}(\ubar{s}) \xrightarrow[n\to\infty]{} \Cat_{\frac{k}{2}}.
\]
In addition, the second moment property (AAU2) implies for all $\ubar{t}\in\mc{T}(\ubar{s})$ that
\[
\abs{\E a_n(\ubar{t})-1} \leq C^{(k/2)}_n
\]
where $C^{(k/2)}_n$ converges to $0$ as $n\to\infty$. Combining the last two statements
we obtain
\[
\bigabs{\Cat_{\frac{k}{2}}-\frac{1}{n b_n^{k/2}}\sum_{\ubar{t}\in\mc{T}_{\frac{k}{2}+1}(\ubar{s})}\hspace{-0.2cm}{\E a_n^{b}(\ubar{t})} }
\leq
\bigabs{\Cat_{\frac{k}{2}}-  \frac{\#\mc{T}_{\frac{k}{2}+1}(\ubar{s})}{n b_n^{k/2}}}   +
C^{(k/2)}_n   \frac{\#\mc{T}_{\frac{k}{2}+1}(\ubar{s})}{n b_n^{k/2}}
 \xrightarrow[n\to\infty]{}0.
\]

\underline{Conclusion of Step 2:}
Adding all contributions from the three types of equivalence classes we arrive at $\E\integrala{\sigma_n}{x^k} \xrightarrow[n\to\infty]{}\Cat_{\frac{k}{2}}$ in accordance with relation~\eqref{eq:momentsofsemicircle}.
\end{proof}


\subsection{Decay of Variance of Moments}

Finally, we analyze the variances of the moments of the empirical spectral distributions and their limiting bahavior as the matrix dimension tends to infinity.

\begin{theorem}\label{thm:variancetozero}
Let $(X_n)_n$ be a sequence of periodic random band matrices which is based on an $\alpha$-almost uncorrelated triangular array $(a_n)_n$ with $\alpha\geq\frac{1}{2}$ and bandwidth $b=(b_n)_n$. Denote by $(\sigma_n)_n$ the ESDs of $(X_n)_n$. Then the following holds:

\begin{itemize}
\item[i)] If $b_n\to\infty$, then
$
\V\integrala{\sigma_n}{x^k} \xrightarrow[n\to\infty]{} 0
$ for all $k\in\N$.
\end{itemize}
If, in addition, at least one of the conditions ii)-iv) is satisfied then
\[
\V\integrala{\sigma_n}{x^k} \xrightarrow[n\to\infty]{} 0 \quad \text{summably fast for all $k\in\N$}.
\]
\begin{itemize}
\item[ii)] All random variables of $(a_n)_n$ are $\{+1,-1\}$-valued and $\frac{1}{b_n^3}$ is summable over $n$.
\item[iii)] $(a_n)_n$ is strongly $\alpha$-almost uncorrelated with  $\alpha>\frac{1}{2}$, and the sequences $\frac{1}{b_n^2}$, $\frac{1}{b_n}D^{(l)}_n$ and $C^{(l)}_n$ are summable over $n$ for all $l\in\N$ .
\item[iv)] $(a_n)_{n\in\N}$ is a Wigner scheme and $\frac{1}{nb_n}$ is summable over $n$.
\end{itemize}
\end{theorem}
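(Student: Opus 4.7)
The plan is to expand
\[
\V\langle \sigma_n, x^k\rangle = \frac{1}{n^2 b_n^k} \sum_{\ubar{t},\ubar{t}' \in \oneto{n}^k_{b}} \bigl(\E[a_n^b(\ubar{t}) a_n^b(\ubar{t}')] - \E[a_n^b(\ubar{t})]\,\E[a_n^b(\ubar{t}')]\bigr)
\]
and, in the same spirit as the proof of Theorem~\ref{thm:expectationtocatalan}, to partition the pairs $(\ubar{t},\ubar{t}')$ into the equivalence classes $\mc{T}(\ubar{s},\ubar{s}')$. Since only at most $(k+1)^{2k}$ such classes appear, it suffices to bound the contribution of each class separately. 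Within each class I would further decompose $\mc{T}(\ubar{s},\ubar{s}') = \mc{T}^d(\ubar{s},\ubar{s}') \sqcup \bigsqcup_{l=1}^{k} \mc{T}^c_l(\ubar{s},\ubar{s}')$ into the edge-disjoint part and the parts with exactly $l$ shared edges, so as to match the counting bounds of Lemmas~\ref{lem:howmany} and \ref{lem:doublehowmany} to the decay afforded by (AAU1)--(AAU3).

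On $\mc{T}^d(\ubar{s},\ubar{s}')$ the two tuples involve fundamentally different index pairs, so the individual and joint expectations are accessible through (AAU1)--(AAU3) directly. In the leading case in which both $\ubar{s}$ and $\ubar{s}'$ consist of double edges only, the products reduce to products of squares and (AAU2) yields
\[
\bigl|\E[a_n^b(\ubar{t}) a_n^b(\ubar{t}')] - 1\bigr| \leq C^{(k)}_n, \qquad \bigl|\E[a_n^b(\ubar{t})]\E[a_n^b(\ubar{t}')] - 1\bigr| \leq 2 C^{(k/2)}_n + (C^{(k/2)}_n)^2.
\]
Combined with Lemma~\ref{lem:howmany}~a) and the prefactor $1/(n^2 b_n^k)$, this class contributes $O(C^{(k/2)}_n + C^{(k)}_n)$, which vanishes. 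For classes where $\ubar{s}$ or $\ubar{s}'$ contains an odd edge, (AAU1) collapses the individual and joint expectations to $O(n^{-\alpha})$; together with the extra $b_n^{-1}$ saving of Lemma~\ref{lem:howmany}~b) and the hypothesis $\alpha \geq \tfrac{1}{2}$, these classes contribute $O(b_n^{-1})$. On the common-edge side, Lemma~\ref{lem:doublehowmany} in every scenario removes at least a factor of $nb_n$ compared with the disjoint count, so coupling with the crude moment bound $|\E[a_n^b(\ubar{t})a_n^b(\ubar{t}')]| \leq C_{2k}$ gives a contribution of order $1/(nb_n)$ per class. Summing finitely many classes yields $\V\langle \sigma_n, x^k\rangle = o(1)$, proving~i).

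For the summability statements ii)--iv) one reruns the same decomposition while tracking rates. Part~iv) is cleanest: for a Wigner scheme independence annihilates every disjoint contribution, leaving only the common-edge piece of order $1/(nb_n)$, which is summable exactly by hypothesis. Part~ii) exploits that $a_n^b(\ubar{t})^2 \equiv 1$ for $\pm 1$ entries, so the disjoint double-edges-only covariance vanishes identically; only odd-edge disjoint classes (handled by (AAU1) with $\alpha \geq \tfrac{1}{2}$) and common-edge classes (handled via Lemma~\ref{lem:doublehowmany} with a $1/(nb_n)$ bound absorbed by summability of $1/b_n^3$) survive. For Part~iii) the new ingredient is (AAU3), invoked in the critical disjoint case in which one tuple contains a single $4$-fold edge paired with double edges: this refined fourth moment bound upgrades the $n^{-\alpha}$ supplied by (AAU1) to the required $D^{(l)}_n$ decay, and the three summability hypotheses on $1/b_n^2$, $D^{(l)}_n/b_n$ and $C^{(l)}_n$ are tailored exactly to the common-edge, $4$-fold-mixed, and pure-double-edge term types respectively.

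The hardest part will be the bookkeeping in the disjoint case for mixed profiles. While (AAU1) handles fully-odd edges and (AAU2) handles pure squares, the borderline $4$-fold scenario would only yield an $n^{-\alpha}$ decay from (AAU1) alone, which is too weak for summability in iii), and this is precisely where the refined condition (AAU3) of Definition~\ref{def:alphaalmostuncorrelated} must be brought in. Isolating this marginal family, aligning it with the hypothesis on $D^{(l)}_n$ in iii), and simultaneously verifying that the slightly different bookkeeping in ii) and iv) does not require (AAU3), is the most delicate step; all remaining contributions are controlled by fairly direct applications of the combinatorial Lemmas~\ref{lem:howmany} and \ref{lem:doublehowmany} of Subsection~4.1.
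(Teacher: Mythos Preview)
Your overall architecture matches the paper's proof: decompose the variance sum into equivalence classes $\mc{T}(\ubar{s},\ubar{s}')$, split each into edge-disjoint and common-edge parts, handle the leading pure-doubles disjoint piece via (AAU2), and isolate the single-$4$-fold-plus-doubles profile as the place where (AAU3) enters. Your identification of how the hypotheses in ii)--iv) align with the various case types is also correct.

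There is, however, one genuine gap. Your common-edge argument asserts that Lemma~\ref{lem:doublehowmany} removes a factor $nb_n$ from the count and that the crude bound $|\E[a_n^b(\ubar{t})a_n^b(\ubar{t}')]| \leq C_{2k}$ then suffices to give $O(1/(nb_n))$. This fails when the profiles carry many single edges. For instance, if $\kappa_1(\ubar{s})=\kappa_1(\ubar{s}')=k$ and $l=1$, Lemma~\ref{lem:doublehowmany}~c) gives $\#\mc{T}^c_1 \leq k^2(2k)^{2k}\, n b_n^{2k-2}$, and the crude moment bound produces a contribution of order $b_n^{k-2}/n$, which diverges for $k\geq 3$. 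The remedy, carried out in the paper's Step~2, Case~2, is to apply (AAU1) to the \emph{joint} expectation as well: superpose $\ubar{t}$ and $\ubar{t}'$ into a single cycle $\ubar{u}\in\oneto{n}^{2k}$ via Lemma~\ref{lem:graphsuperposition}, observe that $\kappa_1(\ubar{u}) \geq \max(\kappa_1(\ubar{s})+\kappa_1(\ubar{s}')-2l,\,0)$, and use the resulting decay $n^{-\alpha\kappa_1(\ubar{u})}$ to balance the large count. Your disjoint odd-edge sketch has an analogous imprecision --- an odd edge need not be a single edge, so (AAU1) need not supply any $n^{-\alpha}$ decay, and the saving must sometimes come entirely from the sharper count in Lemma~\ref{lem:howmany}~ii)~b) --- but you already flag that bookkeeping as the hard part, and it becomes routine once the finer subcase distinctions (the paper's Step~1, Cases~2 and~3) are written out.
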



Before we begin with the proof of Theorem~\ref{thm:variancetozero}, let us formulate a lemma which facilitates the use of condition (AAU3).


\begin{lemma}\label{lem:useaauthree}
Let $(a_n)_n$ be a strongly $\alpha$-almost uncorrelated triangular scheme. Then for all $l,N\in\N$, $l\geq 3$ odd, and fundamentally different pairs $P_1,\ldots,P_l$ in $\sqr{N}$ we have for all $n\geq N$:
\begin{align*}
&\bigabs{\E \left [a_n(P_1)^4 a_n(P_2)^2\cdots a_n(P_l)^2\right ]
\,-\, \E \left [ a_n(P_1)^4 a_n(P_2)^2\cdots a_n(P_{l_1})^2\right ] \cdot\E\left  [ a_n(P_{l_2})^2\cdots a_n(P_l)^2\right ]} \\
&\leq D^{(l)}_n + 
C_4
\cdot C^{(l_2)}_n + D^{(l_1)}_n\cdot 
C_{2l_2}
,
\end{align*}
where we set $l_1\defeq \frac{l-1}{2}$ and $l_2\defeq\frac{l+1}{2}$ (thus $l-l_2=l_1$). 
\end{lemma}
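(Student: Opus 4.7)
The plan is a three-term expansion that replaces each of the three expectations in the lemma by its ``leading order'' according to the relevant almost-uncorrelated condition. Write $A := \E[a_n(P_1)^4 a_n(P_2)^2 \cdots a_n(P_l)^2]$, $B := \E[a_n(P_1)^4 a_n(P_2)^2 \cdots a_n(P_{l_1})^2]$, and $C := \E[a_n(P_{l_2})^2 \cdots a_n(P_l)^2]$, so that the task is to bound $|A - BC|$.

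First, I would apply (AAU3) to the full list $P_1, \ldots, P_l$ to write $A = \E[a_n(P_1)^4] + \rho$ with $|\rho| \leq D^{(l)}_n$. Applying (AAU3) again, this time to the sublist $P_1, \ldots, P_{l_1}$ (which remains fundamentally different as a subfamily of fundamentally different pairs), gives $B = \E[a_n(P_1)^4] + \rho'$ with $|\rho'| \leq D^{(l_1)}_n$; in the boundary case $l_1 = 1$ the product of squared factors in $B$ is empty, so $B = \E[a_n(P_1)^4]$ and $\rho' = 0$, and the bound remains trivially true. Finally, (AAU2) applied to the $l_2$ pairs $P_{l_2}, \ldots, P_l$ (note $l - l_2 + 1 = l_2$) yields $C = 1 + \tau$ with $|\tau| \leq C^{(l_2)}_n$.

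Substituting into $A - BC$ and regrouping gives the algebraic identity
$$A - BC \;=\; \rho \;-\; \rho' \, C \;-\; \E[a_n(P_1)^4] \, \tau.$$
The remaining ingredients are bounds on the coefficients on the right: (AAU1) with the single multiplicity $\delta_1 = 4$ yields $|\E[a_n(P_1)^4]| \leq C_4$, and (AAU1) with multiplicities $\delta_1 = \cdots = \delta_{l_2} = 2$ yields $|C| \leq C_{2 l_2}$ (in both applications no $\delta_i$ equals $1$, so the $n^{-\alpha}$ factor is absent). The triangle inequality then produces exactly
$$|A - BC| \;\leq\; D^{(l)}_n \;+\; C_4 \cdot C^{(l_2)}_n \;+\; D^{(l_1)}_n \cdot C_{2 l_2},$$
which is the stated bound.

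I do not expect a real obstacle here: conditions (AAU1)--(AAU3) have been crafted precisely to make such triangle-inequality bookkeeping go through once the correct splitting is identified. The only items that need a moment's thought are (i) verifying that the sublists of fundamentally different pairs to which (AAU2) and (AAU3) are applied are still fundamentally different, which is immediate from the definition, and (ii) handling the edge case $l_1 = 1$, where (AAU3) would formally involve an empty product of squared factors and must be read as the trivial statement $B = \E[a_n(P_1)^4]$.
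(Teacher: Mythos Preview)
Your proof is correct and is essentially identical to the paper's: both add and subtract $\E[a_n(P_1)^4]$ and $\E[a_n(P_1)^4]\cdot C$ to obtain the same three-term decomposition $A-BC=\rho-\rho' C-\E[a_n(P_1)^4]\,\tau$, then apply (AAU3), (AAU3), (AAU2) and the (AAU1) bounds $C_4$, $C_{2l_2}$ to the respective pieces. The only difference is notational --- you name the remainder terms $\rho,\rho',\tau$, while the paper writes them out explicitly.
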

\begin{proof}
Adding and subtracting $\E a_n(P_1)^4-\E a_n(P_1)^4\cdot \E a_n(P_{l_2})^2 \cdots a_n(P_l)^2$ we obtain
\begin{align*}
&\bigabs{\E\left[ a_n(P_1)^4 a_n(P_2)^2\cdots a_n(P_l)^2\right]\,-\, \E \left[a_n(P_1)^4 a_n(P_2)^2\cdots a_n(P_{l_1})^2\right]\cdot\E\left[ a_n(P_{l_2})^2 \cdots a_n(P_l)^2\right]}\\
&= | \E \left[a_n(P_1)^4[a_n(P_2)^2\cdots a_n(P_l)^2-1]\right] \,+\, \E\left[ a_n(P_1)^4\right] \\
&\quad - \, \E\left[ a_n(P_1)^4[a_n(P_2)^2\cdots a_n(P_{l_1})^2-1]\right]\cdot \E \left[a_n(P_{l_2})^2 \cdots a_n(P_l)^2\right] \\
&\quad -\,  \E\left[ a_n(P_1)^4\right]\cdot \E\left[ a_n(P_{l_2})^2 \cdots a_n(P_l)^2\right] |\\
&\leq D^{(l)}_n\, +\, \abs{\E \left[a_n(P_1)^4\right]}\abs{\E \left[a_n(P_{l_2})^2 \cdots a_n(P_l)^2-1\right]} \, + \,D^{(l_1)}_n\cdot 
C_{2l_2}
\\
&\leq D^{(l)}_n \,+ 
C_4
\cdot C^{(l_2)}_n \,+\, D^{(l_1)}_n\cdot 
C_{2l_2}
.
\end{align*}
\end{proof}

\begin{proof}[Proof of Theorem~\ref{thm:variancetozero}]
Since
$\V\integrala{\sigma_n}{x^k} = \E\left(\integrala{\sigma_n}{x^k}^2\right) - \left(\E\integrala{\sigma_n}{x^k}\right)^2$
it suffices to show
\begin{equation}\label{eq:star}
\frac{1}{n^2b_n^k} \sum_{\ubar{t},\ubar{t}'\in\oneto{n}_{b}^k} \left( \E a_n(\ubar{t}) a_n(\ubar{t}') - \E a_n(\ubar{t}) \E a_n(\ubar{t}') \right) \xrightarrow[n\to\infty]{} 0
\end{equation}
and to determine when this convergence is summably fast.

To analyze the sum in \eqref{eq:star} we proceed in a similar fashion as in the proof of Theorem~\ref{thm:expectationtocatalan}. We identify different cases of profiles for $\ubar{s}$ and $\ubar{s}'$ such that bounds on the partial sums over $(\ubar{t},\ubar{t}')$ in the subsets $\mc{T}^d(\ubar{s},\ubar{s}')$, $\mc{T}^c(\ubar{s},\ubar{s}')$ of the corresponding equivalence classes $\mc{T}(\ubar{s},\ubar{s}')$ (see discussion below Lemma~\ref{lem:howmany}) can be derived by the same reasoning. Since the total number of equivalence classes is bounded by the $n$-independent number $(k+1)^{2k}$ (cf.~Lemma~\ref{lem:howmany}~i) the bounds on the partial sums suffice to prove the theorem.

In each of the following cases, we determine which conditions in addition to $(a_n)_n$ being an \emph{$\alpha$-almost uncorrelated triangular array with $\alpha \geq 1/2$} are needed for regular convergence (i.e.~convergence per se) and for summable convergence. Our findings are summarized at the beginning of each case. We close by discussing  the statements i) through iv) of Theorem~\ref{thm:variancetozero} separately.

\noindent\underline{1. Step: Disjoint edge sets}\newline
In view of \eqref{eq:star} we need to derive upper bounds on
\[
P_d (\ubar{s},\ubar{s}') \defeq
\bigabs{\frac{1}{n^2b_n^k} \sum_{(\ubar{t},\ubar{t}')\in\mc{T}^d(\ubar{s},\ubar{s}')} \left( \E a_n^{b}(\ubar{t}) a_n^{b}(\ubar{t}') - \E a_n^{b}(\ubar{t}) \E a_n^{b}(\ubar{t}') \right)}
\]


\noindent\underline{1. Case: Both $\ubar{s}$ and $\ubar{s}'$ have only even edges.}\newline
\noindent\underline{1. Subcase: $\kappa_2(\ubar{s})= k/2 = \kappa_2(\ubar{s}')$}\newline
[Claim: We achieve regular convergence if the sequences $(C^{(l)}_n)_n$ converge to zero and summable convergence if the sequences $(C^{(l)}_n)_n$ are summable.]
Due to conditions (AAU1) and (AAU2) we have for all $(\ubar{t},\ubar{t}')\in\mc{T}^d(\ubar{s},\ubar{s}')$
\begin{multline*}
\abs{\E a_n^{b}(\ubar{t}) a_n^{b}(\ubar{t}') - \E a_n^{b}(\ubar{t}) \E a_n^{b}(\ubar{t}')}\\
\leq \abs{\E a_n^{b}(\ubar{t}) a_n^{b}(\ubar{t}')-1} + \abs{\E a_n^{b}(\ubar{t}) -1} \cdot \abs{\E a_n^{b}(\ubar{t}')} + \abs{\E a_n^{b}(\ubar{t}') -1} \leq D_n
\end{multline*}
with $D_n\defeq C^{(k)}_n + C^{(k/2)}_n
\cdot C_k
+ C^{(k/2)}_n$. Moreover, Lemma~\ref{lem:howmany} and \eqref{eq:disjointbound} imply $\#\mc{T}^d(\ubar{s},\ubar{s}') \leq k^{2k} \cdot n^2b_n^k$. Hence $P_d (\ubar{s},\ubar{s}') \leq k^{2k} D_n$ and the claim follows.


\noindent
\underline{2. Subcase: We have $\kappa_l(\ubar{s})\geq 1$ or $\kappa_l(\ubar{s}')\geq 1$ for some even $l\geq 4$}.
\newline
[Claim: We achieve regular convergence if $b_n\to\infty$, and summable convergence if (AAU3) holds and $1/b_n^2$, $D^{(l)}_n\cdot 1/b_n$,  $C^{(l)}_n$ are summable for all $l$.]
Condition (AAU1) provides the following basic bound on $\abs{\E a_n^{b}(\ubar{t}) a_n^{b}(\ubar{t}') - \E a_n^{b}(\ubar{t}) \E a_n^{b} (\ubar{t}')}$ for all $(\ubar{t},\ubar{t}')\in\mc{T}(\ubar{s},\ubar{s}')$
\begin{eqnarray}\label{summandsimpleestimate}
\abs{\E a_n^{b}(\ubar{t}) a_n^{b}(\ubar{t}')} + \abs{\E a_n^{b}(\ubar{t})}\cdot \abs{\E a_n^{b}(\ubar{t}')} \leq 
C_{2k} + C_k^2
=:B.\qquad
\end{eqnarray}
In view of Lemma~\ref{lem:howmany} the following observation is useful. For $\ubar{s} \in \oneto{n}^k_{b}$ with only even edges  the sum of the entries of its profil satisfies
\[
\kappa_2(\ubar{s}) + \kappa_4(\ubar{s}) + \ldots + \kappa_k(\ubar{s}) \quad
\begin{cases}=k/2,&\text{if } \kappa_2(\ubar{s})= k/2,\\=k/2-1,&\text{if } \kappa_2(\ubar{s})= k/2-2 \text{ and } \kappa_4(\ubar{s})=1,\\ \leq k/2-2,&\text{else}.
\end{cases}
\]
Thus $P_d (\ubar{s},\ubar{s}') \leq k^{2k}\cdot B \cdot b_n^{-1}$ holds always in this subcase proving the claim on regular convergence.

Observe that the bound $P_d (\ubar{s},\ubar{s}') \leq k^{2k}\cdot B \cdot b_n^{-2}$ would prove the remaining claim on summable convergence. By the reasoning above the only cases where this bound does \emph{not} hold is if $\kappa_4(\ubar{s})=1$,
$\kappa_2(\ubar{s})=(k-4)/2$ and $\kappa_2(\ubar{s}')=k/2$ or vice versa (that is, we have one 4-fold edge and double edges otherwise). Then we must resort to (AAU3) and Lemma~\ref{lem:useaauthree} (note that $a_n^b(\ubar{t})=a_n(\ubar{t})$ for $b_n$-relevant tuples $\ubar{t}$)
with $l=k-1$. All in all we obtain in these two exceptional cases the estimate
\[
P_d (\ubar{s},\ubar{s}') \leq k^{2k}\cdot \left(  D^{(k-1)}_n + 
C_4
\cdot C^{(k/2)}_n + D^{(k/2-1)}_n\cdot 
C_k
\right) \cdot b_n^{-1}
\]
which converges summably fast if $\frac{1}{b_n}D^{(l)}_n$ and $C^{(l)}_n$ are summable for all $l$. This completes the discussion of Case 1.~in which $\ubar{s}$ and $\ubar{s}'$ have only even edges.

\noindent\underline{2. Case: $\ubar{s}$ has at least one odd edge and $\ubar{s}'$ has only even edges, or vice versa}.\newline
Throughout this case, we assume that $\ubar{s}$ has at least one odd edge and $\ubar{s}'$ has only even edges. Then, actually, $\ubar{s}$ has at least two odd edges, since the total number of edges must be even. Moreover, it suffices to use the following estimate on the summands that immediately follows for all $(\ubar{t},\ubar{t}')\in\mc{T}(\ubar{s},\ubar{s}')$ from condition (AAU1) and from the definition of $B$ in \eqref{summandsimpleestimate}
\begin{equation}\label{eq:case2summands}
\abs{\E a_n^{b}(\ubar{t}) a_n^{b}(\ubar{t}') - \E a_n^{b}(\ubar{t}) \E a_n^{b} (\ubar{t}')} \leq B n^{-\alpha\cdot\kappa_1(\ubar{s})} .
\end{equation}
Observe in addition that the number of summands in $P_d (\ubar{s},\ubar{s}')$ is bounded by
\begin{equation}\label{eq:case2numbersummands}
\#\mc{T}^d(\ubar{s},\ubar{s}') \leq k^{2k} \cdot n^2 b_n^{K(\ubar{s})+K(\ubar{s}')-1}\quad\text{with}\quad K(\ubar{s}) \defeq \kappa_1(\ubar{s}) + \kappa_2(\ubar{s}) + \ldots + \kappa_{k}(\ubar{s})
\end{equation}
due to Lemma~\ref{lem:howmany} and \eqref{eq:disjointbound}.

\noindent\underline{1. Subcase:  $\ubar{s}$ has no $m$-fold edge with $m\geq 3$}
\newline
[Claim: We always have regular convergence. Summable convergence holds for $\alpha >1/2$.]
In this subcase we have $K(\ubar{s}')\leq k/2$ and $K(\ubar{s})=\kappa_1(\ubar{s})+(k-\kappa_1(\ubar{s}))/2$ with $\kappa_1(\ubar{s})\geq 2$ even. Combining the estimates \eqref{eq:case2summands} and \eqref{eq:case2numbersummands} yields for $\alpha\geq 1/2$
\[
P_d (\ubar{s},\ubar{s}') \leq k^{2k}\cdot B \cdot b_n^{\kappa_1(\ubar{s})/2-1} n^{-\alpha\cdot\kappa_1(\ubar{s})} \leq k^{2k}\cdot B \cdot b_n^{2/2-1} n^{-\alpha \cdot2} = k^{2k}\cdot B \cdot n^{-2\alpha}
\]
proving the claim.

\noindent\underline{2. Subcase: $\ubar{s}$ has an $m$-fold edge, $m\geq 3$.}\newline
[Claim: We achieve regular convergence if $b_n \to\infty$ and summable convergence if $1/b_n^2$ is summable and $\alpha > 1/2$.] Again we use $K(\ubar{s}')\leq k/2$. In order to bound $K(\ubar{s})$ we distinguish the cases of even and odd values for $\kappa_1(\ubar{s})$.

If $\kappa_1(\ubar{s})$ is odd then $K(\ubar{s})$ is maximal if there is exactly one $3$-fold edge and all remaining edges are double edges leading to
\[
K(\ubar{s})\leq\kappa_1(\ubar{s})+\frac{k-\kappa_1(\ubar{s})-3}{2}+1 .
\]
For $\alpha\geq 1/2$ we obtain with estimates \eqref{eq:case2summands} and \eqref{eq:case2numbersummands}
\[
P_d (\ubar{s},\ubar{s}') \leq k^{2k}\cdot B \cdot b_n^{(\kappa_1(\ubar{s})-3)/2} n^{-\alpha\cdot\kappa_1(\ubar{s})} \leq k^{2k}\cdot B \cdot b_n^{(1-3)/2} n^{-\alpha \cdot 1} = k^{2k}\cdot B \cdot b_n^{-1} n^{-\alpha}.
\]
Thus regular convergence is established and convergence in summability follows via the Cauchy-Schwarz inequality from the summability of $1/b_n^2$ and of $n^{-2\alpha}$.

If $\kappa_1(\ubar{s})$ is even then there exist at least two odd edges that are each at least $3$-fold. Consequently,
\[
K(\ubar{s})\leq\kappa_1(\ubar{s})+\frac{k-\kappa_1(\ubar{s})-6}{2}+2 = \frac{k+\kappa_1(\ubar{s})}{2}-1
\]
and we obtain for $\alpha\geq 1/2$
\[
P_d (\ubar{s},\ubar{s}') \leq k^{2k}\cdot B \cdot b_n^{\kappa_1(\ubar{s})/2-2} n^{-\alpha\cdot\kappa_1(\ubar{s})} \leq k^{2k}\cdot B \cdot b_n^{0/2-2} n^{-\alpha \cdot 0} = k^{2k}\cdot B \cdot b_n^{-2}
\]
from the estimates \eqref{eq:case2summands} and \eqref{eq:case2numbersummands}. This completes the proof of the claim and closes our discussion of Case 2.

\noindent\underline{3. Case: Both $\ubar{s}$ and $\ubar{s}'$ have at least one odd edge.}\newline
[Claim: We achieve regular convergence if $b_n\to\infty$ and summable convergence if $1/b_n^3$ is summable.]
In this case application of  (AAU1), Lemma~\ref{lem:howmany}, and \eqref{eq:disjointbound} gives
\begin{equation}\label{eq:basicestimatecase3}
P_d (\ubar{s},\ubar{s}') \leq k^{2k}\cdot B \cdot \big( b_n^{K(\ubar{s})-1-k/2}\, n^{-\alpha\cdot\kappa_1(\ubar{s})} \big) \cdot \big( b_n^{K(\ubar{s}')-1-k/2}\, n^{-\alpha\cdot\kappa_1(\ubar{s}')} \big)
\end{equation}

Observe that in this case $k$ may be odd.
To bound the last two factors in \eqref{eq:basicestimatecase3} we now justify the estimate
\begin{equation}
\label{eq:casethreeexplanation}
\frac{b_n^{K(\ubar{s})-1-k/2}}{n^{\alpha\cdot\kappa_1(\ubar{s})}} \leq \max\left(\frac{1}{b_n^{3/2}},\frac{1}{b_n^{1/2}n^{\alpha}}\right)
\end{equation}
The first bound in the maximum applies to the case $\kappa_1(\ubar{s})=0$. In this case, one has $K(\ubar{s})\leq\lfloor(k-1)/2 \rfloor$ because there has to be at least one odd edge. The second bound in the maximum applies to the case $\kappa_1(\ubar{s})>0$. In this case, $K(\ubar{s})\leq \kappa_1(\ubar{s})+\lfloor (k-\kappa_1(\ubar{s}))/2\rfloor$, so the l.h.s.\ of \eqref{eq:casethreeexplanation} is bounded by $b_n^{\kappa_1(\ubar{s})/2-1}/n^{\alpha\kappa_1(\ubar{s})}$, which is maximal at $\kappa_1(\ubar{s})=1$, since $\alpha\geq 1/2$. The same bound \eqref{eq:casethreeexplanation} applies to the last factor in \eqref{eq:basicestimatecase3}, and regular convergence follows. It remains to argue the summability of $b_n^{-3}$, $b_n^{-2}n^{-\alpha}$, and $b_n^{-1}n^{-2\alpha}$. The first sequence is summable by assumption, the second using the Hölder estimate $\norm{b_n^{-2}n^{-\alpha}}_1\leq \norm{b_n^{-2}}_{3/2}\norm{n^{-\alpha}}_3$, the third using $\norm{b_n^{-1}n^{-2\alpha}}_1\leq \norm{b_n^{-1}}_{3}\norm{n^{-2\alpha}}_{3/2}$.

\noindent\underline{2. Step: Non-disjoint edge sets}\\
In this second step we need to analyze
\[
P_c (\ubar{s},\ubar{s}') \defeq
\bigabs{\frac{1}{n^2b_n^k} \sum_{(\ubar{t},\ubar{t}')\in\mc{T}^c(\ubar{s},\ubar{s}')} \left( \E a_n^{b}(\ubar{t}) a_n^{b}(\ubar{t}') - \E a_n^{b}(\ubar{t}) \E a_n^{b}(\ubar{t}') \right)}.
\]


\noindent\underline{1. Case: $\ubar{s}$, $\ubar{s}'$ have only even edges.}\newline
[Claim: We always have regular convergence. Summable convergence holds if $1/(nb_n)$ is summable.]
Bounding each summand using condition (AAU1) and estimate \eqref{summandsimpleestimate} we find
\[
\forall\,(\ubar{t},\ubar{t}')\in\mc{T}^c(\ubar{s},\ubar{s}'):\,\abs{\E a_n^{b}(\ubar{t}) a_n^{b}(\ubar{t}') - \E a_n^{b}(\ubar{t}) \E a_n^{b}(\ubar{t}')} \leq B.
\]
Then Lemma~\ref{lem:doublehowmany} gives $P_c (\ubar{s},\ubar{s}') \leq k^2 \cdot k^{2k}\cdot B \cdot (n b_n)^{-1}$ and the claim follows.


\noindent\underline{2. Case: $\ubar{s}$ or $\ubar{s}'$ contains at least one odd edge}\newline
[Claim: We always have regular convergence. Summable convergence holds if $1/(nb_n)$ is summable.]
We start with the basic estimate
\[
P_c (\ubar{s},\ubar{s}') \leq \sum_{l=1}^k \frac{1}{n^2b_n^k}\sum_{(\ubar{t},\ubar{t}')\in\mc{T}^c_l(\ubar{s},\ubar{s}') }(\abs{\E a_n^{b}(\ubar{t}) a_n^{b}(\ubar{t}')} +\abs{\E a_n^{b}(\ubar{t}) } \cdot \abs{\E a_n^{b} (\ubar{t}')}).
\]
Thus, it suffices to show that each of the $k$ summands converges to zero, and that this convergence is summably fast provided $(n b_n)^{-1}$ is summable. To this end, pick an $l\in\{1,\ldots,k\}$ and a $(\ubar{t},\ubar{t}')\in\mc{T}^c_l (\ubar{s},\ubar{s}')$. We bound $\abs{\E a_n^{b}(\ubar{t})}\cdot\abs{\E a_n^{b}(\ubar{t}')}$ by condition (AAU1)
\[
\abs{\E a_n^{b}(\ubar{t})} \cdot \abs{\E a_n^{b} (\ubar{t}')} \leq \frac{
C_k^2
}{n^{\alpha\cdot (\kappa_1(\ubar{s}) +\kappa_1(\ubar{s}'))}} \leq \frac{
C_k^2
}{b_n^{\frac{1}{2}\cdot(\kappa_1(\ubar{s}) +\kappa_1(\ubar{s}'))}}.
\]
Now, to deal with $\abs{\E a_n^{b}(\ubar{t}) a_n^{b}(\ubar{t})}$ we first define a cycle $\ubar{u}\in\oneto{n}^{2k}$ according to Lemma~\ref{lem:graphsuperposition} which passes through the graph obtained by superposition of the graphs of $\ubar{t}$ and $\ubar{t}'$. Then we obtain
\[
\kappa_1(\ubar{u}) \geq \max(\kappa_1(\ubar{t})+\kappa_1(\ubar{t}')-2l,0) = \max(\kappa_1(\ubar{s})+\kappa_1(\ubar{s}')-2l,0)
\]
because for each common edge of $\ubar{t}$ and $\ubar{t}'$, the number of single edges can be reduced by at most $2$ after superposition of the graphs. We conclude
\[
\abs{\E a_n^{b}(\ubar{t}) a_n^{b}(\ubar{t})} = \abs{\E a_n^{b}(\ubar{u})} \leq \frac{
C_{2k}
}{n^{\alpha\cdot\kappa_1(\ubar{u})}} \leq \frac{
C_{2k}
}{b_n^{\frac{1}{2}\cdot\kappa_1(\ubar{u})}} \leq \frac{
C_{2k}
}{b_n^{\frac{1}{2}\cdot\max(\kappa_1(\ubar{s})+\kappa_1(\ubar{s}')-2l,0)}},
\]
Lemma~\ref{lem:doublehowmany} yields
$
\#\mc{T}^c_l(\ubar{s},\ubar{s}) \leq k^2 \cdot (2k)^{2k} \cdot nb_n^{K(\ubar{s}) + K(\ubar{s}')-l-1},
$
where $K(\ubar{s}) = \sum_{j}\kappa_j(\ubar{s}) \leq (k+\kappa_1(\ubar{s}))/2$ is defined as in \eqref{eq:case2numbersummands}. Recalling the definition of $B$ in \eqref{summandsimpleestimate} we obtain
\begin{eqnarray*}
P_c (\ubar{s},\ubar{s}')
&\leq& \frac{k^2 (2k)^{2k}}{n} b_n^{\frac{1}{2}(\kappa_1(\ubar{s})+\kappa_1(\ubar{s}'))-l-1} \left( \frac{
C_{2k}
}{b_n^{\frac{1}{2}\cdot\max(\kappa_1(\ubar{s})+\kappa_1(\ubar{s}')-2l,0)}} +  \frac{
C_k^2
}{b_n^{\frac{1}{2}\cdot(\kappa_1(\ubar{s}) +\kappa_1(\ubar{s}'))}}       \right)\\
&\leq& \frac{k^2 (2k)^{2k}}{n}\cdot
B \, \cdot
\frac{b_n^{\frac{1}{2}(\kappa_1(\ubar{s})+\kappa_1(\ubar{s}')-2l)-1}}{b_n^{\frac{1}{2}\cdot\max(\kappa_1(\ubar{s})+\kappa_1(\ubar{s}')-2l,0)}} \;
\leq \; 
B \, \cdot
\frac{k^2 (2k)^{2k}}{n b_n}
\end{eqnarray*}
and the claim follows. We have now gathered all the information that we need to prove statements of
i) through iv) of Theorem~\ref{thm:variancetozero}:

\begin{enumerate}[i)]
	\item As can be seen from the outcome of each of the cases, the condition that $b_n\to \infty$ suffices for regular convergence of the variance to zero.
	\item Assuming $\{+1,-1\}$-valued entries, we observe that the term in \eqref{eq:star} vanishes for each subsum in our case distinction except in "Step 1, Case 3" and "Step 2, Case 2" (since if $\ubar{t}$, say, consists of only even edges, we have $a_n(\ubar{t})=1$). In those cases we need for a summable convergence that $\frac{1}{b_n^3}$ is summable and that $\frac{1}{nb_n}$ is summable. Since the former implies the latter, it is enough to assume that $\frac{1}{b_n^3}$ is summable.
	\item Without extra assumptions on the entries of $(a_n)_n$, all of the above subcases are relevant. Therefore, for a summable convergence of the variance we need $\alpha>\frac{1}{2}$ and the sequences $(\frac{1}{b_n^2})_n$, $(\frac{1}{n}D^{(l)}_n)_n$ and $(C^{(l)}_n)_n$ for all $l\in\N$ to be summable over $n$. In particular, we use condition (AAU3).
	\item Assuming i.i.d. entries in $(a_n)_n$ with existing moments, zero expectation and unit variance, we see that
$\E a^{b}_n(\ubar{t}) a^{b}_n(\ubar{t}') - \E a^{b}_n(\ubar{t}) \E a^{b}_n(\ubar{t}')=0$ for all tuples $(\ubar{t},\ubar{t}')$ with disjoint edge sets. Therefore, only the partial sums of "Step 2" contribute in \eqref{eq:star}. The two cases there require summability of $(\frac{1}{nb_n})_n$ to conclude summable convergence.	
\end{enumerate}
\end{proof}

\sloppy
\printbibliography
\noindent\textsf{(Michael Fleermann and Werner Kirsch)\newline
FernUniversit\"at in Hagen\newline
Fakult\"at f\"ur Mathematik und Informatik\newline
58084 Hagen, Germany}\newline
\textit{E-mail address:}
\texttt{michael.fleermann@fernuni-hagen.de}\\\textit{E-mail address:}
\texttt{werner.kirsch@fernuni-hagen.de}
\vspace{5mm}

\noindent\textsf{(Thomas Kriecherbauer)\newline
Universität Bayreuth\newline
Mathematisches Institut\newline
95440 Bayreuth, Germany}\newline
\textit{E-mail address:}
\texttt{thomas.kriecherbauer@uni-bayreuth.de}

\end{document}